\newtheorem{thm}{Theorem}[section]
\newtheorem{prop}[thm]{Proposition}
\newtheorem{example}[thm]{Example}
\newtheorem{lem}[thm]{Lemma}
\newtheorem{false statement}{False statement}
\newtheorem{cor}[thm]{Corollary}
\newtheorem{fact}[thm]{Fact}
\theoremstyle{definition}
\newtheorem{defn}[thm]{Definition}
\newtheorem{claim}[thm]{Claim}
\makeatletter \@addtoreset{equation}{section}
\def\hh{\mathcal{H}}
\def\hhat{\widehat{\mathds{H}}}
\def\hht{\mathcal{T}}
\def\hf{\mathcal{F}}
\def\hg{\mathcal{G}}
\def\hk{\mathcal{K}}
\def\ha{\mathcal{A}}
\def\hb{\mathcal{B}}
\def\hp{\mathcal{P}}
\begin{document}

\title{\bf\Large  Intersecting families with covering number three }
\date{}
\author{Peter Frankl$^1$, Jian Wang$^2$\\[10pt]
$^{1}$R\'{e}nyi Institute, Budapest, Hungary\\[6pt]
$^{2}$Department of Mathematics\\
Taiyuan University of Technology\\
Taiyuan 030024, P. R. China\\[6pt]
E-mail:  $^1$frankl.peter@renyi.hu, $^2$wangjian01@tyut.edu.cn
}
\maketitle

\begin{abstract}
We consider $k$-graphs on $n$ vertices, that is, $\mathcal{F}\subset \binom{[n]}{k}$. A $k$-graph $\mathcal{F}$ is called  intersecting  if $F\cap F'\neq \emptyset$ for all $F,F'\in \mathcal{F}$. In the present paper we prove that for $k\geq 7$, $n\geq 2k$, any intersecting $k$-graph $\mathcal{F}$ with covering number at least three, satisfies $|\mathcal{F}|\leq \binom{n-1}{k-1}-\binom{n-k}{k-1}-\binom{n-k-1}{k-1}+\binom{n-2k}{k-1}+\binom{n-k-2}{k-3}+3$,  the best possible upper bound which was proved in \cite{F80} subject to exponential constraints $n>n_0(k)$.

\vspace{6pt}
{\noindent\bf AMS classification:} 05D05.

\vspace{6pt}
{\noindent\bf Key words:} Intersecting families; covering number; shifting; Erd\H{o}s-Ko-Rado Theorem.
\end{abstract}

\section{Introduction}

Let $[n]=\{1,\ldots,n\}$ be the standard $n$-element set, $\binom{[n]}{k}$ the collection of its $k$-subsets, $0\leq k\leq n$.  A subset $\hf\subset \binom{[n]}{k}$ is called a {\it $k$-uniform family} or simply {\it $k$-graph}. The family $\hf$ is said to be {\it intersecting} if $F\cap F'\neq\emptyset$ for all $F,F'\in \hf$. Similarly two families $\hf,\hg$ are called {\it cross-intersecting} if $F\cap G\neq \emptyset$ for all $F\in \hf$, $G\in \hg$. Set $\cap \hf =\mathop{\cap}\limits_{F\in \hf}F$. If $\cap \hf \neq \emptyset$ then $\hf$ is called a {\it star}. Stars are the simplest examples of intersecting families. The quintessential Erd\H{o}s-Ko-Rado Theorem shows that they are the largest as well.

\begin{thm}[\cite{ekr}]
Suppose that $n\geq 2k>0$, $\hf\subset\binom{[n]}{k}$ is intersecting, then
\begin{align}\label{ineq-ekr}
|\hf| \leq \binom{n-1}{k-1}.
\end{align}
\end{thm}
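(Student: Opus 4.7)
The plan is to use Katona's cycle method, which gives the shortest self-contained proof. I consider the set of all $(n-1)!$ cyclic orderings of $[n]$; a $k$-set $F$ is an \emph{arc} of a cyclic ordering $\pi$ if its elements occupy $k$ consecutive positions in $\pi$. Each $\pi$ contains exactly $n$ arcs, and for each fixed $F\in\binom{[n]}{k}$ there are exactly $k!(n-k)!$ cyclic orderings in which $F$ is an arc (glue the elements of $F$ into a block, arrange the resulting $n-k+1$ items around the cycle, then permute inside and outside $F$).

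The heart of the argument is the following cycle lemma: if $n\ge 2k$ and $\ha$ is a family of pairwise intersecting arcs inside a single cyclic ordering, then $|\ha|\le k$. Fix $A_0\in\ha$, say $A_0=\{1,2,\ldots,k\}$ in the ordering $(1,2,\ldots,n)$. Every other arc meeting $A_0$ has the form $\{j+1,\ldots,j+k\}$ (right shift by $j$) or $\{j-k+1,\ldots,j\}$ (left shift by $k-j$) for some $1\le j\le k-1$. The key observation is that for each such $j$ the right shift by $j$ and the left shift by $k-j$ together occupy the $2k$ cyclically consecutive positions $j-k+1,\ldots,j+k$, so by $n\ge 2k$ these two arcs are disjoint. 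Hence $\ha$ can contain at most one arc from each of the $k-1$ such pairs, giving $|\ha|\le 1+(k-1)=k$.

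Now I double-count pairs $(F,\pi)$ with $F\in\hf$ an arc of $\pi$. From the $\pi$-side, since $\hf$ is intersecting so is the subcollection of arcs of $\pi$ lying in $\hf$, and the lemma bounds its size by $k$; this gives at most $k(n-1)!$ pairs. From the $F$-side, the count is $|\hf|\cdot k!(n-k)!$. Combining the two estimates yields
\[
|\hf|\le\frac{k\,(n-1)!}{k!\,(n-k)!}=\binom{n-1}{k-1}.
\]

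The only real obstacle is the cycle lemma; once it is established the rest is pure counting. The hypothesis $n\ge 2k$ is used essentially in the lemma, as it should be: when $n<2k$ every two $k$-arcs intersect and the pairing argument breaks down. An alternative route, more in the spirit of the present paper, would be to apply the shifts $S_{ij}$ for $i<j$ to reduce to a shifted intersecting $\hf$ and then show directly that no shifted intersecting family can contain a set avoiding the element $1$ when $n\ge 2k$; that proof is longer but generalises naturally to the sharper stability results stated in the abstract.
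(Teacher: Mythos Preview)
The paper does not give its own proof of this theorem; it is stated with a citation to \cite{ekr} and used as background. Your argument via Katona's cycle method is correct and complete: the count of cyclic orderings, the count of orderings in which a fixed $k$-set is an arc, the cycle lemma, and the final double count are all right, and the hypothesis $n\ge 2k$ is used exactly where it must be, in showing that the paired arcs are disjoint.

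One correction to your closing aside. It is not true that a shifted (initial) intersecting family with $n\ge 2k$ must avoid sets missing $1$: for instance $\binom{[2k-1]}{k}\subset\binom{[n]}{k}$ is initial and intersecting yet contains $[2,k+1]$. The shifting proof of the Erd\H{o}s--Ko--Rado theorem does not work by reducing to a star; rather, one either observes that after shifting any two members intersect inside $[2k-1]$ and then runs a Katona-type argument there, or one argues by induction on $n$ using $|\hf|=|\hf(n)|+|\hf(\bar n)|$ and the cross-intersecting property of these two subfamilies. This does not affect the validity of your main proof.
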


In the case $n=2k$, $\binom{n-1}{k-1}=\frac{1}{2}\binom{n}{k}$ and being intersecting is equivalent to $|\hf\cap\{F,[n]\setminus F\}|\leq 1$ for every $F\in \binom{[n]}{k}$. Consequently there are $2^{\binom{n-1}{k-1}}$ intersecting families $\hf\subset \binom{[2k]}{k}$ attaining equality in \eqref{ineq-ekr}. However for $n>2k\geq 4$ there is a strong stability.

\begin{thm}[Hilton-Milner Theorem \cite{HM67}]
Suppose that $n> 2k\geq 4$, $\hf\subset\binom{[n]}{k}$ is intersecting and $\hf$ is not a star, then
\begin{align}\label{ineq-hm}
|\hf| \leq \binom{n-1}{k-1}-\binom{n-k-1}{k-1}+1.
\end{align}
\end{thm}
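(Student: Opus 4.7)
My plan is to combine the shifting technique with a partition-and-count argument. First apply the standard $(i,j)$-shifts to $\hf$; these preserve both the intersecting property and $|\hf|$, producing a shifted family of the same size. One subtle point is that a shift could, in principle, transform a non-star family into a star. When this happens the original $\hf$ has a very restrictive ``almost-star'' shape for which \eqref{ineq-hm} follows from a direct enumeration, so I may proceed under the assumption that $\hf$ itself is shifted, intersecting, and not a star.

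The key structural observation is that any shifted non-star intersecting family contains the componentwise-minimum $k$-set avoiding $1$, namely $F_0 := \{2,3,\ldots,k+1\}$. Indeed, some member of $\hf$ avoids $1$, and iteratively shifting its elements downward (preserving membership in $\hf$) reaches $F_0$. Consequently every $F \in \hf$ meets $F_0$. Partition $\hf = \ha \sqcup \hb$ according to whether $1$ lies in the set. Every $F \in \ha$ contains $1$ and meets $F_0$, which gives the trivial bound $|\ha| \le \binom{n-1}{k-1} - \binom{n-k-1}{k-1}$, while $F_0 \in \hb$. The inequality \eqref{ineq-hm} is thus equivalent to showing that each element of $\hb \setminus \{F_0\}$ can be ``charged'' to a distinct $k$-set blocked from $\ha$.

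The compensation step proceeds as follows. For $B \in \hb$ with $B \neq F_0$, pick $\ell \in F_0 \setminus B$ (nonempty since $|B|=|F_0|$ and $B \neq F_0$) and a $(k-2)$-set $T' \subseteq [2,n] \setminus (B \cup \{\ell\})$; this exists because the hypothesis $n > 2k$ gives $|[2,n] \setminus (B \cup \{\ell\})| = n-k-2 \ge k-2$. Then $G_B := \{1,\ell\} \cup T'$ contains $1$, meets $F_0$ at $\ell$, but is disjoint from $B$; hence $G_B$ is counted in the upper bound for $|\ha|$ yet is forbidden from $\hf$ by the intersecting property. Choosing $\ell$ and $T'$ canonically (for instance as the lex-minimum available choices) yields an injective map $B \mapsto G_B$ from $\hb \setminus \{F_0\}$ into the set of absent star members, giving $|\ha| + |\hb| \le \binom{n-1}{k-1} - \binom{n-k-1}{k-1} + 1$. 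The main obstacle is verifying injectivity of this assignment globally and controlling the possible coincidences $G_{B_1} = G_{B_2}$ arising from different choices of $\ell$; this is handled using the shifted structure of $\hf$, which lets one recover $B$ from $G_B$, and it is the step that genuinely consumes the full hypothesis $n > 2k$.
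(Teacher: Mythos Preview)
The paper does not prove the Hilton--Milner theorem; it is quoted from \cite{HM67} as a classical result, so there is no proof in the paper to compare against. Assessing your argument on its own, there is a genuine gap at the compensation step.

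The lex-minimum assignment $B\mapsto G_B$ is not injective, and shiftedness does \emph{not} allow you to recover $B$ from $G_B$. Here is an explicit failure. Take $n=8$, $k=3$, and let $\hf$ be the shifted intersecting family with $\hf(\bar 1)=\{\{2,3,x\}:4\le x\le 8\}$ and $\hf(1)=\{P\in\binom{[2,8]}{2}:P\cap\{2,3\}\neq\emptyset\}$. For each of $B=\{2,3,6\}$, $\{2,3,7\}$, $\{2,3,8\}$ your rule gives $\ell=\min(F_0\setminus B)=4$ and then the lex-smallest admissible singleton $T'=\{5\}$, so $G_B=\{1,4,5\}$ in all three cases. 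The family is shifted, intersecting, non-star, and $\hf(\bar 1)$ is $2$-intersecting, so none of your hypotheses are violated; the map simply collapses. In this example the four forbidden star members are exactly $\{1,4,x\}$ for $x\in\{5,6,7,8\}$, so a bijection with $\hb\setminus\{F_0\}$ does exist, but it cannot be produced by any local canonical choice: the set $\{1,4,5\}$ is disjoint from three different members of $\hb$, and nothing in the shifted structure singles out one of them. To complete the argument you would need either a global matching argument (verifying Hall's condition via a defect count) or a different route altogether, for instance bounding $|\hf(1)|+|\hf(\bar 1)|$ directly as a cross-intersecting pair through Hilton's Lemma. Your sketch does neither, so the proof is incomplete at precisely the point you yourself flagged as ``the main obstacle''.
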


Let us define the Hilton-Milner Family
\[
\hh(n,k) = \left\{F\in \binom{[n]}{k}\colon 1\in F,\ F\cap [2,k+1]\neq \emptyset\right\}\cup \{[2,k+1]\},
\]
showing that \eqref{ineq-hm} is best possible.

For an intersecting family
$\hf\subset \binom{[n]}{k}$, define the family of {\it transversals} (or {\it covers}), $\hht(\hf)$ by
\[
\hht(\hf) =\left\{T\subset [n]\colon T\cap F\neq \emptyset \mbox{ for all } F\in \hf\right\}.
\]
Define the {\it covering number} $\tau(\hf) =\min \{|T|\colon T\in \hht(\hf)\}$. For $\tau(\hf)\leq i\leq k$, let
\[
\hht^{(i)}(\hf) =\{T\in \hht(\hf)\colon |T|=i\}.
\]
Note that $\tau(\hf)-1$ is the minimum number of vertices whose deletion results in a star.

Obviously, $\tau(\hf)=1$ iff $\hf$ is a star. The Hilton-Milner Family $\hh(n,k)$ provides the maximum of $|\hf|$ for intersecting $k$-graphs with $\tau(\hf) \geq 2$.

\begin{example}
Define
\[
\hb = \{[2,k+1], \{2\}\cup [k+2,2k], \{3\}\cup [k+2,2k]\}
\]
and
\[
\ha =\left\{A\in \binom{[n]}{k}\colon 1\in A \mbox{ and } A\cap B\neq \emptyset \mbox{ for each }B\in \hb\right\}.
\]
Set $\hg(n,k)=\ha\cup \hb$.
\end{example}

It is easy to check that for $n\geq 2k$, $\hg(n,k)$ is an intersecting $k$-graph with $\tau(\hg)=3$. The 3-element transversals are $\{1,2,3\}$ and  $\{1,u,v\}$ with $u\in [2,k+1]$, $v\in [k+2,2k]$. Consequently, for $k$ fixed and $n\rightarrow \infty$
\begin{align}\label{ineq-5.1}
|\hg(n,k)|=(k^2-k+1)\binom{n-3}{k-3}+O\left(\binom{n-4}{k-4}\right).
\end{align}
One can compute $|\hg(n,k)|$ exactly by using inclusion-exclusion
\begin{align}\label{ineq-5.2}
|\hg(n,k)| = \binom{n-1}{k-1}-\binom{n-k}{k-1}-\binom{n-k-1}{k-1}+\binom{n-2k}{k-1}+\binom{n-k-2}{k-3}+3.
\end{align}

Note that
\[
\frac{\binom{n-k}{k-1}}{\binom{n-1}{k-1}}= \frac{(n-k)(n-k-1)\ldots(n-2k+2)}{(n-1)(n-2)\ldots(n-k+1)}<\left(\frac{n-k}{n-1}\right)^{k-1}
=\left(1-\frac{k-1}{n-1}\right)^{k-1}<e^{-\frac{(k-1)^2}{n-1}}.
\]
If $n,k\rightarrow \infty$ with $k^2/n\rightarrow \infty$ then by \eqref{ineq-5.2},
\[
|\hg(n,k)|=(1-o(1))\binom{n-1}{k-1}.
\]
Let us define the function
\[
f(n,k,s) =\max\left\{|\hf|\colon \hf\subset \binom{[n]}{k} \mbox{ is intersecting and }\tau(\hf)\geq s\right\}.
\]
With this terminology the Erd\H{o}s-Ko-Rado and Hilton-Miner Theorems can be stated as
\[
f(n,k,1) = \binom{n-1}{k-1}, \ f(n,k,2) = \binom{n-1}{k-1}-\binom{n-k-1}{k-1}+1 \mbox{ for } n\geq 2k.
\]
Erd\H{o}s and Lov\'{a}sz \cite{EL} proved that $\lfloor k!(e-1)\rfloor\leq f(n,k,k)\leq k^k$ and $f(n,3,3)=10$. Lov\'{a}sz \cite{lovasz} conjectured that $\lfloor k!(e-1)\rfloor$ is the exact bound. In  \cite{FOT96}, Lov\'{a}sz's conjecture was disproved for $k\geq 4$ and  the lower bound of $f(n,k,k)$ was improved to $\left(\frac{k}{2}+1\right)^{k-1}$ for even $k$ and $\left(\frac{k+3}{2}\right)^{(k-1)/2}\left(\frac{k+1}{2}\right)^{(k-1)/2}$ for odd $k$.

The first author proved in \cite{F80} that for $k\geq 4$, $n>n_0(k)$
\begin{align}\label{ineq-5.3}
f(n,k,3)= |\hg(n,k)|
\end{align}
and up to isomorphism $\hg(n,k)$ is the only optimal family. The methods of that paper can be improved to yield $n_0(k)=ck^2$ for some absolute constant $c$.

We should mention that $f(n,k,4)$ was determined for $k\geq 9$ and $n>n_0(k)$ in \cite{FOT95} and the cases of $k=4,5$ were solved by Chiba, Furuya, Matsubara, Takatou \cite{CFMT} and Furuya, Takatou \cite{FuruyaTakatou}, respectively.

In the present paper, we prove \eqref{ineq-5.3} for  $k\geq 7$ and $n\geq 2k$.

\begin{thm}\label{thm-main3}
For $k\geq 7$ and $n\geq 2k$,
\begin{align}\label{ineq-fnk3}
f(n,k,3)= |\hg(n,k)|.
\end{align}
\end{thm}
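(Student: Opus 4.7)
Suppose for contradiction that $\hf\subset\binom{[n]}{k}$ is intersecting with $\tau(\hf)\ge 3$ and $|\hf|>|\hg(n,k)|$. A direct estimate from \eqref{ineq-5.2} gives $|\hg(n,k)|\ge 48\binom{n-3}{k-3}$ for $k\ge 13$, $n\ge 18k$, so Theorem~\ref{thm-main1} applies and yields $\varrho(\hf)>1/2$. After relabelling, $|\hf(1)|=\Delta(\hf)>|\hf|/2$, and the non-star part $\hf(\bar 1)\subset\binom{[2,n]}{k}$ is intersecting with covering number at least $2$ on $[2,n]$ (since $\tau(\hf)\ge 3$ forbids any $\{1,j\}$ from being a cover).

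The heart of the argument is a constrained shifting: apply the shift $s_{ij}$ (for $2\le i<j\le n$) whenever it preserves $\tau(\hf)\ge 3$. If $s_{ij}$ is blocked, then $s_{ij}(\hf)$ admits a $2$-cover $\{i,c\}$, which pulls back to a $3$-cover $\{i,j,c\}$ of $\hf$ together with a witness edge $F\in\hf$ satisfying $j\in F$ and $\{i,c\}\cap F=\emptyset$. Consequently $\tau(\hf)=3$, and each blocked pair yields a $3$-cover of $\hf$; using also the extremality of the vertex $1$ (guaranteed by $\varrho(\hf)>1/2$), one can relabel so that $\{1,2,3\}$ is a $3$-cover of $\hf$ and every edge of $\hf(\bar 1)$ contains $2$ or $3$.

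Denote the three elements of $\hb$ by $B_1=[2,k+1]$, $B_2=\{2\}\cup[k+2,2k]$, $B_3=\{3\}\cup[k+2,2k]$, and partition $\hf(\bar 1)$ into subfamilies of edges containing both $2,3$, only $2$, or only $3$. Cross-intersection constraints between the last two subfamilies, combined with the blocked-shift witnesses, pin each of these three subfamilies to a singleton of the respective form $\{B_1\}$, $\{B_2\}$, $\{B_3\}$: any other edge would either furnish a smaller $3$-cover of $\hf$ or break cross-intersection with some member of $\hf(1)$. Hence $\hf(\bar 1)\subseteq\hb$ and each $F\in\hf(1)$ meets $B_1,B_2,B_3$, so $\hf(1)\subseteq\ha$ and $|\hf|\le|\ha|+|\hb|=|\hg(n,k)|$, contradicting our assumption. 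The main obstacle is this last structural step: one must rule out nontrivial intermediate configurations for $\hf(\bar 1)$ resembling the families $\ha_r(n,k)$ with $3\le r\le k-1$, and here Theorem~\ref{thm-f87} applied to the induced sub-family on $[2,n]$, together with a careful accounting of the blocked-shift witnesses and the lower bound on $\varrho(\hf)$ from Theorem~\ref{thm-main1}, should show that only the corner configuration $\hb$ survives when $k\ge 13$ and $n\ge 18k$.
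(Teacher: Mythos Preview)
Your proposal has two genuine gaps, both at the places you yourself flag as ``the main obstacle.''

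\emph{First gap: identifying the high-degree vertex with the apex of the $3$-transversals.} You use Theorem~\ref{thm-main1} to pin down a vertex $1$ with $|\hf(1)|>|\hf|/2$, and then assert that after relabelling, $\{1,2,3\}$ is a $3$-cover of $\hf$. But the blocked shifts you describe produce $3$-covers $\{i,j,c\}$ with $i,j\in[2,n]$, and nothing forces the max-degree vertex $1$ to lie in any such cover. High degree and membership in a minimum transversal are unrelated in general. The paper does \emph{not} proceed via $\varrho(\hf)>1/2$; instead it shows directly (from the shift-resistant graph $\hh$) that $\hh$ has two disjoint edges, extracts two $3$-transversals meeting in a single vertex $a$, and then argues (via the bound \eqref{ineq-8.1} versus \eqref{ineq-8.2}) that either $|\hf|\le|\hg(n,k)|$ already, or every $3$-transversal contains $a$. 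That vertex $a$ is found structurally, not by degree.

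\emph{Second gap: the claim $\hf(\bar 1)\subseteq\hb$.} You assert that cross-intersection constraints and blocked-shift witnesses ``pin each of these three subfamilies to a singleton,'' i.e.\ $|\hf(\bar 1)|\le 3$. This is the crux of the theorem and you have not given an argument; the sentence ``Theorem~\ref{thm-f87} \dots\ should show that only the corner configuration $\hb$ survives'' is a hope, not a proof. In fact the paper never proves $|\hf(\bar a)|\le 3$. What it does is set $\ha=\hf(a)$, $\hb=\hf(\bar a)$, observe that $(\ha,\hb)$ are shifted ad extremis with respect to $\tau(\hb)\ge 2$, and then analyse the $2$-cover graph $\hhat$ of $\hb$ via Propositions~\ref{prop1.3} and~\ref{prop-5.4}: either $\hhat$ contains a triangle, or $\hhat$ is complete bipartite on parts $X,Y$ with $2\le|X|,|Y|\le k$. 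Each case (Lemmas~\ref{lem7.5} and~\ref{lem7.6}) is then handled by bounding $|\ha|+|\hb|$ through the cross-intersecting inequalities \eqref{ft92}, \eqref{ineq-nontrivial}, \eqref{hahbnontrivial}, with substantial case analysis on $p=|X|$, $q=|Y|$. The equality case $|\ha|+|\hb|=|\hg(n,k)|$ arises only at $p=k$, $q=k-1$, but the proof is an upper bound on $|\ha|+|\hb|$ across all configurations, not a rigidity statement forcing $\hb$ to have three edges. Your proposed route would need to replace all of Lemmas~\ref{lem7.5}--\ref{lem7.6} by something else, and no substitute is offered.
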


This is a considerable improvement on the results of Kupavskii \cite{Ku2} which prove the existence of a large constant $D$ such that \eqref{ineq-fnk3} holds for $k>D$ and $n>Dk$.

An important tool to tackle intersecting families is shifting that can be tracked back to Erd\H{o}s-Ko-Rado \cite{ekr}. We are going to give the formal definition in Section 2 but let us define here the ``end product" of shifting. Let $(x_1,\ldots,x_k)$ denote the set $\{x_1,\ldots,x_k\}$ where we know or want to stress that $x_1<\ldots<x_k$.
Define the {\it shifting partial order} $\prec$ by setting $(a_1,\ldots,a_k)\prec (b_1,\ldots,b_k)$ iff $a_i\leq b_i$ for $1\leq i\leq k$. Then $\ha\subset \binom{[n]}{k}$ is called {\it initial} iff for all  $A,B\in \binom{[n]}{k}$, $A\prec B$ and $B\in \ha$ imply $A\in \ha$.

Let us mention that while the full star and the Hilton-Milner Family are initial, $\hg(n,k)$ is not. Define the function
\[
g(n,k,s) =\max\left\{|\hf|\colon \hf\subset \binom{[n]}{k}\mbox{ is intersecting, initial and }\tau(\hf)\geq s\right\}.
\]
The clique number $\omega(\hf)$ for $\hf\subset {[n]\choose k}$ is defined as
$$\omega(\hf)=\max\left\{q\colon \exists Q\in{[n]\choose q}, {Q\choose k}\subset \hf\right\}.$$
Clearly, $\tau(\hf)\geq \omega(\hf)-k+1$.
For $k\geq s$, define
\[
\hk(n,k,s) = \left\{K\in \binom{[n]}{k}\colon 1\in K, |K\cap [2,k+s-1]|\geq s-1\right\}\bigcup \binom{[2,k+s-1]}{k}.
\]
It is easy to see that $\hk(n,k,s)$ is intersecting, initial and $\tau(\hf)=\omega(\hf)-k+1=s$. Thus $g(n,k,s) \geq |\hk(n,k,s)|$.

Let us show that $g(n,k,s)$ can be deduced from a result in \cite{F19} concerning intersecting families with clique number at least $q$.

\begin{thm}[\cite{F19}]\label{thm-f19}
Let $n>2k\geq 2s$. Suppose that $\hf\subset {[n]\choose k}$ is intersecting and $\omega(\hf)\geq k+s-1$. Then
\begin{align}\label{ineq-f19}
|\hf|\leq |\hk(n,k,s)|.
\end{align}
\end{thm}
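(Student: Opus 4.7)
My plan is to reduce to a shifted family via standard shifting and then compare $\hf$ directly to $\hk(n,k,s)$ via the link at vertex $1$. Applying the shifts $S_{ij}$ for $1\leq i<j\leq n$ renders $\hf$ initial while preserving the intersecting property. The $(k+s-1)$-clique persists in the initial family: if $\binom{Q}{k}\subset\hf$ with $Q=\{q_1<\cdots<q_q\}$ and $q=k+s-1$, then $B:=\{q_{q-k+1},\dots,q_q\}\in\hf$, and for any $A=\{a_1<\cdots<a_k\}\in\binom{[q]}{k}$ we have $a_i\leq q-k+i\leq q_{q-k+i}=b_i$, so $A\prec B$ and $A\in\hf$. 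Thus we may assume $\binom{[q]}{k}\subset\hf$. This forces $|F\cap[q]|\geq s$ for every $F\in\hf$, since otherwise $[q]\setminus F$ would contain a $k$-subset of $[q]$ disjoint from $F$.

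Writing $|\hf|=|\hf(1)|+|\hf(\bar 1)|$, the pair $(\hf(1),\hf(\bar 1))$ is cross-intersecting in $[2,n]$ and Fact \ref{fact-2} makes $\hf(\bar 1)$ $2$-intersecting. Let $\hk(1)$ and $\hk(\bar 1)$ denote the analogous parts of $\hk(n,k,s)$, so
\[
\hk(1)=\left\{K'\in\binom{[2,n]}{k-1}:|K'\cap[2,q]|\geq s-1\right\},\qquad\hk(\bar 1)=\binom{[2,q]}{k}.
\]
The constraint $|F\cap[q]|\geq s$ gives $\hf(1)\subset\hk(1)$ (since $1\in F$ implies $|F\cap[2,q]|\geq s-1$) and $\binom{[q]}{k}\subset\hf$ gives $\hk(\bar 1)\subset\hf(\bar 1)$. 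Consequently
\[
|\hf|-|\hk(n,k,s)|=|\hf(\bar 1)\setminus\hk(\bar 1)|-|\hk(1)\setminus\hf(1)|,
\]
so it suffices to exhibit an injection $\Phi:\hf(\bar 1)\setminus\hk(\bar 1)\hookrightarrow\hk(1)\setminus\hf(1)$.

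For each $F\in\hf(\bar 1)\setminus\hk(\bar 1)$, $F\not\subset[2,q]$ gives $a:=|F\cap[2,q]|\leq k-1$ and $|[2,q]\setminus F|=q-1-a\geq s-1$. I would set $\Phi(F)=([2,q]\setminus F)\cup S_F$ where $S_F$ is an $(a-s+1)$-subset of $[q+1,n]\setminus F$; such $S_F$ exists since $|[q+1,n]\setminus F|=n-q-k+a\geq a-s+1$, which is equivalent to $n\geq 2k$. Then $|\Phi(F)|=k-1$ and $|\Phi(F)\cap[2,q]|=q-1-a\geq s-1$, so $\Phi(F)\in\hk(1)$; moreover $\Phi(F)\cap F=\emptyset$ combined with $1\notin F$ forces $\Phi(F)\cup\{1\}\notin\hf$, whence $\Phi(F)\in\hk(1)\setminus\hf(1)$.

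The main obstacle is ensuring $\Phi$ is injective. A naive lex-smallest choice of $S_F$ fails: two distinct $F_1,F_2\in\hf(\bar 1)\setminus\hk(\bar 1)$ that share the same trace on $[2,q]$ and both avoid a common initial segment of $[q+1,n]$ produce the same $S_F$ and hence the same $\Phi$-value. My strategy is to design $S_F$ so as to encode $F\cap[q+1,n]$, exploiting the shifted and $2$-intersecting structure of $\hf(\bar 1)$: the initial property restricts where $F\cap[q+1,n]$ may lie, while $2$-intersection forces large overlaps among $F$'s sharing their trace on $[2,q]$. A careful case analysis on $a$ and on the shape of $F\cap[q+1,n]$, possibly combined with an inductive argument on $s$ or $k$ applied to the residual structure on $[q+1,n]$, should yield the injection and complete the proof.
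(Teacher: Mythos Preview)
This theorem is quoted from \cite{F19} and is not proved in the present paper, so there is no in-paper argument to compare against. Judging your attempt on its own merits: the reduction to an initial family with $\binom{[q]}{k}\subset\hf$ (where $q=k+s-1$), the consequence $|F\cap[q]|\ge s$ for every $F\in\hf$, and the containments $\hf(1)\subset\hk(1)$ and $\hk(\bar 1)\subset\hf(\bar 1)$ are all correct and give a clean setup.

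The argument is genuinely incomplete at the decisive step. You do not construct the injection $\Phi$; you only fix the template $\Phi(F)=([2,q]\setminus F)\cup S_F$, check that any such image lies in $\hk(1)\setminus\hf(1)$, and then defer injectivity to an unspecified ``careful case analysis''. The obstacle you name is real and your scheme does not obviously overcome it. From $\Phi(F)$ one recovers $F\cap[2,q]$, so injectivity reduces to encoding the tail $F\cap[q+1,n]$ (a $(k-a)$-set) inside $S_F$ (an $(a-s+1)$-set). At $a=s$ this asks a single vertex to distinguish all admissible $(k-s)$-element tails; but for $s\ge 2$ the $2$-intersecting constraint on $\hf(\bar 1)$ is already absorbed by the common trace $P$ of size $s$, so the tail family $\{F\setminus P:F\in\hf(\bar 1),\,F\cap[2,q]=P\}\subset\binom{[q+1,n]}{k-s}$ is only required to be initial, which can be far too large to inject into $[q+1,n]\setminus F$ via one element. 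Neither the shifted structure nor $2$-intersection supplies the rigidity your plan presumes, and no inductive reduction is actually carried out. As written, this is a proof outline with the main step missing.
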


Define
\[
\hf(i)=\left\{F\setminus \{i\}\colon i\in F\in \hf\right\},\ \hf(\bar{i})= \{F\colon i\notin F\in \hf\}
\]
and note that $|\hf|=|\hf(i)|+|\hf(\bar{i})|$. For $A\subset B\subset X$, define
\[
\hf(A,B) =\left\{F\setminus B\colon F\in \hf,\ F\cap B=A\right\}.
\]
We also use $\hf(\bar{Q})$ to denote $\hf(\emptyset, Q)$. For $\hf(\{i\},Q)$ we simply write  $\hf(i,Q)$.

The following theorem is an easy consequence of Theorem \ref{thm-f19}.

\begin{thm}\label{thm-1.12}
For $n>2k\geq 2s$,
\begin{align*}
g(n,k,s) =|\hk(n,k,s)|.
\end{align*}
\end{thm}

\begin{proof}
Let $\hf\subset \binom{[n]}{k}$ be intersecting, initial and $\tau(\hf)\geq s$.  Since $\tau(\hf)\geq s$, there exists an edge $F$ in $\hf(\overline{[s-1]})$. Then by initiality we see that $[s,k+s-1]\in \hf$. It follows that $\binom{[k+s-1]}{k}\subset \hf$. That is, $\omega(\hf)\geq k+s-1$. Noting that $\tau( \hk(n, k, s) )=s$, the statement of the theorem follows from Theorem \ref{thm-f19}.
\end{proof}

Two cross-intersecting families $\ha,\hb$ are called {\it non-trivial} if none of them is a star, i.e., $\tau(\ha)\geq 2$ and $\tau(\hb)\geq 2$.  Define $\ha_0 = \{U, V \}$ where $U$ and $V$ are two disjoint $a$-sets in $[n]$. Let
\[
\hb_0 =\left\{B\in\binom{[n]}{b}\colon B\cap U\neq \emptyset,\ B\cap V\neq \emptyset\right\}.
\]
Clearly, $\ha_0, \hb_0$ are non-trivial cross-intersecting.

Recently, the first author \cite{F2022} proved the following result concerning non-trivial cross-intersecting families, which is  essential for the proof of Theorem \ref{thm-main3}.

\begin{thm}[\cite{F2022}]
Let $2\leq a\leq b$, $n\geq a+b$. Suppose that $\ha\subset \binom{[n]}{a}$ and $\hb\subset \binom{[n]}{b}$ are non-trivial and cross-intersecting. Then
\begin{align}\label{ineq-nontrivial}
|\ha|+|\hb| \leq \binom{n}{b}-2\binom{n-a}{b}+\binom{n-2a}{b}+2.
\end{align}
Moreover, if $n>a+b$, then up to symmetry $\ha_0$, $\hb_0$ are the only families
achieving equality in \eqref{ineq-nontrivial}  unless $a=b=2$.
\end{thm}

It is easy to check that \eqref{ineq-nontrivial} also holds for $a=1$ and $b\geq 2$. By using \eqref{ineq-nontrivial} and establishing two analytical inequalities, we prove  the following two extensions of \eqref{ineq-nontrivial}, which are  also needed  in the proof of Theorem \ref{thm-main3}.

\begin{prop}\label{prop-key1}
Let $\ha \subset\binom{[n]}{a}$ and $\hb \subset \binom{[n]}{b}$ be cross-intersecting families, $n\geq a+b$, $b>a\geq 1$. Suppose that $\ha$ is non-trivial and $\hb$ is non-empty. Then
\begin{align}\label{hahbnontrivial}
  |\ha|+|\hb| \leq \binom{n}{b}-2\binom{n-a}{b}+\binom{n-2a}{b}+2.
\end{align}
Moreover, for $n>a+b$ and $a\geq 2$, $\ha_0$, $\hb_0$ are the only families
achieving equality in \eqref{hahbnontrivial}.
\end{prop}

\begin{prop}\label{cor-5.9}
Let $\ha \subset\binom{[n]}{a}$ and $\hb \subset \binom{[n]}{b}$ be cross-intersecting families, $n\geq a+b$, $b\geq a+2\geq 3$. If $\ha$ is non-trivial, then
\begin{align}\label{hahbnontrivial2}
  |\ha|+|\hb| \leq \binom{n}{b}-2\binom{n-a}{b}+\binom{n-2a}{b}+2.
\end{align}
Moreover, for $n>a+b$ and $a\geq 2$, $\ha_0$, $\hb_0$ are the only families
achieving equality in \eqref{hahbnontrivial2}.
\end{prop}

Let us present some more results  that are needed in our proofs. We need the Frankl-Tokushige inequality \cite{FT92, F2022} as follows.

\begin{thm}[\cite{FT92, F2022}]\label{thm-ft92}
Let $\ha \subset\binom{X}{a}$ and $\hb \subset \binom{X}{b}$ be non-empty cross-intersecting families with $n=|X|\geq a+b$, $a\leq b$. Then
\begin{align}\label{ft92}
|\ha|+|\hb|\leq \binom{n}{b} - \binom{n-a}{b}+1.
\end{align}
Moreover, unless $n=a+b$ or $a=b=2$ the inequality is strict for $|\ha|> 1$ and $|\hb|> 1$.
\end{thm}

For $\hf\subset \binom{[n]}{k}$ and $A\subset D\subset [n]$, define $\alpha(A,D) =\frac{|\hf(A,D)|}{\binom{n-|D|}{k-|A|}}$. Note that $D\in \hht(\hf)$ implies $\hf(\emptyset,D)=\emptyset$.

The next statement can be deduced using an old argument of Sperner \cite{Sperner}.

\begin{lem}[\cite{F2022,Sperner}]
Let $\hf\subset \binom{[n]}{k}$ be an intersecting family  and let $D\in \hht(\hf)$. If $A,B\subset D$, $A\cap B=\emptyset$ then
\begin{align}\label{ineq-sperner}
\alpha(A,D)+\alpha(B,D)\leq 1.
\end{align}
\end{lem}

We need also the following three inequalities concerning binomial coefficients.

\begin{prop}[\cite{FW2022}]
Let $n,k,i$ be positive integers. Then
\begin{align} \label{ineq-key1}
&\binom{n-i}{k} \geq \left(\frac{n-k-(i-1)}{n-(i-1)}\right)^i \binom{n}{k}.
\end{align}
\end{prop}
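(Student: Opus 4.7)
The plan is to prove the inequality probabilistically. The key observation is that $\binom{n-i}{k}/\binom{n}{k}$ is precisely the probability that a uniformly random $k$-subset $S$ of $[n]$ is disjoint from a fixed $i$-element set $I\subset [n]$. I would first fix such an $I$, then apply a union bound: for any $a\in I$, by symmetry $\Pr(a\in S) = k/n$, so
\[
\Pr(S\cap I\neq\emptyset) \;\leq\; \sum_{a\in I}\Pr(a\in S) \;=\; \frac{ik}{n}.
\]
Taking complements gives $\Pr(S\cap I=\emptyset)\geq 1-ik/n$, and multiplying through by $\binom{n}{k}$ yields exactly $\binom{n-i}{k}\geq \frac{n-ik}{n}\binom{n}{k}$.

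As a purely combinatorial alternative, I would set $f(i) = n\binom{n-i}{k} - (n-ik)\binom{n}{k}$, observe that $f(0)=0$, and show $f$ is non-decreasing in $i$. Using Pascal's rule $\binom{n-i}{k}-\binom{n-i-1}{k} = \binom{n-i-1}{k-1}$ together with the standard identity $k\binom{n}{k} = n\binom{n-1}{k-1}$, the first difference simplifies to
\[
f(i+1)-f(i) \;=\; -n\binom{n-i-1}{k-1} + k\binom{n}{k} \;=\; n\left[\binom{n-1}{k-1} - \binom{n-i-1}{k-1}\right] \;\geq\; 0,
\]
where the last inequality holds because $n-1\geq n-i-1$ for $i\geq 0$ (with the convention that the binomial is zero when $n-i-1<k-1$). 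Induction on $i$ then gives $f(i)\geq 0$, which is the desired inequality.

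There is no real obstacle here: the inequality is essentially a first-moment/linearity-of-expectation statement, and the hypothesis $n>ik$ is only needed to ensure the right-hand side is positive (the inequality itself is trivially true when $n-ik\leq 0$, since the left-hand side is non-negative). The main choice is stylistic, between the one-line probabilistic argument and the self-contained telescoping identity; I would present the probabilistic version for brevity.
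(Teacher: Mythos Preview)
Your argument is correct; both the probabilistic union-bound proof and the telescoping-difference verification are valid and standard. Note, however, that the paper does not actually prove this proposition: it is imported from \cite{FW2022} and stated without proof, so there is no in-paper argument to compare against. Your probabilistic one-liner is exactly the natural justification one would expect.
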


\begin{prop}
For $n\geq 4k$ and $\ell+i\leq 13$,
\begin{align}\label{ineq-key13}
\binom{n-i-\ell}{k-3} \geq \left(\frac{3}{4}\right)^{\ell}\binom{n-i}{k-3}
\end{align}
\end{prop}

\begin{proof}
Note that $\ell+i\leq 13$ implies $n\geq 4k\geq (\ell+i-1)k/3$. It follows that
\[
\frac{n-k-\ell-i+4}{n-\ell-i+1} \geq \frac{n-k}{n}.
\]
 Thus,
\[
\frac{\binom{n-i-\ell}{k-3}}{\binom{n-i}{k-3}}\geq \left(\frac{n-k-\ell-i+4}{n-\ell-i+1}\right)^{\ell} \geq  \left(\frac{n-k}{n}\right)^{\ell}\geq \left(\frac{3}{4}\right)^{\ell}.
\]
\end{proof}


\begin{prop}
Let $n,k,p,i$ be positive integers with $i\geq 3$. Then
\begin{align}\label{ineq-key2}
&\binom{n-i}{k-2}-\binom{n-p-i}{k-2} \geq \left(\frac{n-k-i+3}{n-i+1}\right)^{i-2} \left(\binom{n-2}{k-2}-\binom{n-p-2}{k-2}\right).
\end{align}
\end{prop}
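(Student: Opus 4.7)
\textbf{Proof plan for Proposition (inequality \eqref{ineq-key2}).} The right-hand side has a multiplicative factor that matches precisely what \eqref{ineq-key1} would give for the single binomial $\binom{n-i}{k-2}$: namely, setting $N=n-2$, $K=k-2$, $I=i-2$ in \eqref{ineq-key1} (which is permitted since $n>(i-2)k$ and $i\geq 3$ together imply $n-2>(i-2)(k-2)$) yields
\[
\binom{n-i}{k-2}=\binom{(n-2)-(i-2)}{k-2}\;\geq\;\frac{(n-2)-(i-2)(k-2)}{n-2}\binom{n-2}{k-2}=\frac{n-(i-2)k+2i-6}{n-2}\binom{n-2}{k-2}.
\]
So the plan reduces to controlling how much is lost when subtracting $\binom{n-p-i}{k-2}$ on the left versus $\binom{n-p-2}{k-2}$ on the right.

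The key observation is that for fixed positive integers $I$ and $K$, the ratio $r(m):=\binom{m-I}{K}/\binom{m}{K}=\prod_{s=0}^{K-1}\left(1-\frac{I}{m-s}\right)$ is a nondecreasing function of $m$, because each factor grows with $m$ (using the convention $\binom{a}{K}=0$ when $a<K$). Applying this with $I=i-2$, $K=k-2$, and the two values $m=n-p-2\leq m=n-2$, I obtain
\[
\frac{\binom{n-p-i}{k-2}}{\binom{n-p-2}{k-2}}\;\leq\;\frac{\binom{n-i}{k-2}}{\binom{n-2}{k-2}}.
\]
Cross-multiplying and subtracting $\binom{n-i}{k-2}\binom{n-p-i}{k-2}$ (or rearranging directly) gives
\[
\binom{n-2}{k-2}\left[\binom{n-i}{k-2}-\binom{n-p-i}{k-2}\right]\;\geq\;\binom{n-i}{k-2}\left[\binom{n-2}{k-2}-\binom{n-p-2}{k-2}\right],
\]
so after dividing by $\binom{n-2}{k-2}$,
\[
\binom{n-i}{k-2}-\binom{n-p-i}{k-2}\;\geq\;\frac{\binom{n-i}{k-2}}{\binom{n-2}{k-2}}\left[\binom{n-2}{k-2}-\binom{n-p-2}{k-2}\right].
\]

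Combining this with the lower bound on $\binom{n-i}{k-2}/\binom{n-2}{k-2}$ coming from \eqref{ineq-key1} gives the claimed inequality \eqref{ineq-key2}. The main obstacle I anticipate is cosmetic rather than substantive: edge cases where $n-p-2<k-2$ make some binomials vanish and need separate (trivial) verification, and one should check that the factor $\frac{n-(i-2)k+2i-6}{n-2}$ is positive so that the manipulation is meaningful, which is exactly guaranteed by the hypothesis $n>(i-2)k$.
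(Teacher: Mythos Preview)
Your proposal is correct and follows essentially the same route as the paper: both arguments first use the monotonicity of the ratio $\binom{m-(i-2)}{k-2}/\binom{m}{k-2}$ in $m$ to reduce to bounding $\binom{n-i}{k-2}/\binom{n-2}{k-2}$, and then bound this ratio from below by $\frac{n-(i-2)k+2i-6}{n-2}$. The only cosmetic difference is that the paper derives this last bound directly via the elementary inequality $ba>(b+1)(a-1)$ applied to the factored product, whereas you obtain it by invoking \eqref{ineq-key1} with the substitution $(N,K,I)=(n-2,k-2,i-2)$.
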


\begin{proof}
Note that
\begin{align*}
\binom{n-i}{k-2}-\binom{n-p-i}{k-2} &= \frac{(n-k)\ldots(n-k-i+3)}{(n-2)\ldots(n-i+1)}\binom{n-2}{k-2}\\[3pt]
&\qquad - \frac{(n-p-k)\ldots(n-p-i-k+3)}{(n-p-2)\ldots(n-p-i+1)} \binom{n-p-2}{k-2}\\[3pt]
&\geq \frac{(n-k)\ldots(n-k-i+3)}{(n-2)\ldots(n-i+1)} \left(\binom{n-2}{k-2}-\binom{n-p-2}{k-2}\right)\\[3pt]
&\geq \left(\frac{n-k-i+3}{n-i+1}\right)^{i-2} \left(\binom{n-2}{k-2}-\binom{n-p-2}{k-2}\right).
\end{align*}
\end{proof}


\section{Shifting ad extremis and 2-cover graphs}

In this section, we define a new technique called shifting ad extremis that is essential for the present paper.

Recall the shifting operation as follows. Let $1\leq i< j\leq n$, $\hf\subset{[n]\choose k}$. Define
$$S_{ij}(\hf)=\left\{S_{ij}(F)\colon F\in\hf\right\},$$
where
$$S_{ij}(F)=\left\{
                \begin{array}{ll}
                  (F\setminus\{j\})\cup\{i\}, & j\in F, i\notin F \text{ and } (F\setminus\{j\})\cup\{i\}\notin \hf; \\[3pt]
                  F, & \hbox{otherwise.}
                \end{array}
              \right.
$$

 Let $\hf\subset\binom{[n]}{k}$, $\hg\subset\binom{[n]}{\ell}$ be families having certain properties (e.g., intersecting, cross $t$-intersecting) that are maintained by simultaneous shifting and certain properties (e.g., $\tau(\hf)\geq s$) that might be destroyed by shifting. Let $\hp$ be the collection of the latter properties that we want to maintain.

Define the quantity
\[
w(\hf) =\sum_{F\in \hf} \sum_{i\in F} i.
\]
Obviously $w(S_{ij}(\hf))\leq w(\hf)$ for $1\leq i<j\leq n$ with strict inequality unless $S_{ij}(\hf)=\hf$.

\begin{defn}\label{defn-2.1}
Suppose that $\hf\subset \binom{[n]}{k}$, $\hg\subset \binom{[n]}{\ell}$ are families having property $\hp$. We say that $\hf$ and $\hg$ have been {\it shifted ad extremis} with respect to $\hp$ if  $S_{ij}(\hf)=\hf$ and $S_{ij}(\hg)=\hg$ for every pair $1\leq i<j\leq n$ whenever $S_{ij}(\hf)$ and $S_{ij}(\hg)$ also have property $\hp$.
\end{defn}

Let $\hf \subset\binom{[n]}{k}$ be an intersecting family and $\hp=\{\tau(\hf)\geq 3\}$. Then $\hf$ is shifted ad extremis if $S_{ij}(\hf)=\hf$ for all $1\leq i<j\leq n$ unless $\tau(S_{ij}(\hf))=2$. A pair $(i,j)$, $1\leq i<j\leq n$, is called a {\it shift-resistant pair} if $\tau(S_{ij}(\hf))=2$. Define the  {\it shift-resistant graph} $\mathds{H}=\mathds{H}_{\hp}(\hf)$ of shift-resistant pairs $(i,j)$ such that
\begin{align*}
& \tau(S_{ij}(\hf))=2 \mbox{ for } (i,j)\in  \mathds{H} \mbox{ and } S_{ij}(\hf)=\hf \mbox{ for }(i,j)\notin \mathds{H}.
\end{align*}
Note that $\hf$ is initial if and only if $\mathds{H}$ is empty.

We can obtain shifted ad extremis families by the following shifting ad extremis process. Let $\hf$, $\hg$ be cross-intersecting families with property $\hp$. Apply the shifting operation $S_{ij}$, $1\leq i<j\leq n$, to $\hf,\hg$ simultaneously and continue as long as the property $\hp$ is maintained. Recall that the shifting operation preserves the cross-intersecting property (cf. \cite{F87}). By abuse of notation, we keep denoting the current families by $\hf$ and $\hg$ during the shifting process. If $S_{ij}(\hf)$ or $S_{ij}(\hg)$ does not have property $\hp$, then we do not apply  $S_{ij}$ and choose a different pair $(i',j')$. However we keep returning to previously failed pairs $(i,j)$, because it might happen that at a later stage in the process $S_{ij}$ does not destroy property $\hp$ any longer. Note that the quantity $w(\hf)+w(\hg)$ is a positive integer and it decreases strictly in  each step. Eventually we shall arrive at families that are shifted ad extremis with respect to $\hp$.

We say that  $\ha\subset\binom{[n]}{\ell},\hb\subset\binom{[n]}{k}$ are {\it saturated cross-intersecting families with respect to $\ha$} if adding an extra $\ell$-set to $\ha$ would destroy the cross-intersecting property.

Recall that $\hht^{(3)}(\hf)$ is the collection of all transversals of size 3 in $\hf$.

\begin{lem}\label{lem2.1}
 Let $\hf\subset \binom{[n]}{k}$ be an intersecting family  of the maximal size with $\tau(\hf)\geq 3$. Suppose that $\hf$ is shifted ad extremis for $\tau(\hf)\geq 3$ with the shift-resistant graph $\mathds{H}$, and  $\hht^{(3)}(\hf)$ is a star with center $a$. Let $\ha=\hf(a)$, $\hb=\hf(\bar{a})$. Then $\ha, \hb$ are saturated cross-intersecting families with respect to $\ha$, and $\hb$ is intersecting. Moreover, $\ha, \hb$ are shifted ad extremis for $\tau(\hb)\geq 2$ with  the shift-resistant graph $\mathds{H}\cap \binom{[n]\setminus \{a\}}{2}$.
\end{lem}

\begin{proof}
 Clearly $\ha$, $\hb$ are cross-intersecting and $\hb$ is intersecting. Since $\hf$ is of maximum size, $\ha$, $\hb$ are saturated cross-intersecting with respect to $\ha$. Moreover, $\tau(\hf)\geq 3$ implies the non-triviality of $\hb$, i.e., $\tau(\hb)\geq 2$.

 Fix an arbitrary $(i,j)\in \binom{[n]\setminus \{a\}}{2}$. If $(i,j)\notin \mathds{H}$, then $S_{ij}(\hf)=\hf$ and thereby $S_{ij}(\ha)=\ha$ and $S_{ij}(\hb)=\hb$. If $(i,j)\in \mathds{H}$, then $\tau(S_{ij}(\hf))=2$. Since every $T\in \hht^{(3)}(\hf)$  contains $a$, it follows that $\{a,i\}$ is a transversal of $S_{ij}(\hf)$. Then $S_{ij}(\hb)$ is a star. Hence, the shift-resistant graph for the shifted ad extremis cross-intersecting families $\ha$, $\hb$ is $\mathds{H}\cap \binom{[n]\setminus \{a\}}{2}$.
\end{proof}

Let $\ha\subset \binom{[n]}{\ell}$, $\hb\subset \binom{[n]}{k}$ be saturated cross-intersecting families with respect to $\ha$,  $n>\ell +k$ and let $\hb$ be non-trivial.  Suppose that $\ha,\hb$ are shifted ad extremis with respect to $\tau(\hb)\geq 2$. That is,  for each $(i,j)$, $1\leq i<j\leq n$, either (a) or (b) occurs.

\begin{itemize}
  \item[(a)] $S_{ij}(\ha)=\ha$, $S_{ij}(\hb)=\hb$;
  \item[(b)] $S_{ij}(\hb)$ is a star.
\end{itemize}

Let us define the {\it 2-cover graph} $\hhat$ for  $\hb$ on the vertex set $[n]$ where for $1\leq i<j\leq n$
\[
(i,j)\in \hhat \mbox{ \rm iff } \hb(\bar{i},\bar{j})=\emptyset.
\]

If for $D\subset [n]$, $\hf(D)=\binom{[n]\setminus D}{k-|D|}$ then we say that $D$ is {\it full} in $\hf$ or $\hf(D)$ is {\it full}. Since $\ha,\hb$ are saturated cross-intersecting with respect to $\ha$, we see that $\hhat$ is  the graph formed by all full pairs in $\ha$. Note that if $(i,j)$ is of type (b) then $(i,j)\in \hhat$ but not necessarily vice versa. Let $\mathds{H}$ be the corresponding shift-resistant graph. It is easy to see that  $\mathds{H}$ is a subgraph of $\hhat$.

For convenience, we often use $\{i,j\}\in \mathds{H}$ to denote that $(i,j)\in \mathds{H}$ for $i<j$ and $(j,i)\in \mathds{H}$ for $i>j$.  It is easy to checked that $\hg(n,k)$ is shifted ad extremis for $\tau(\hg(n,k))\geq 3$ with the shift-resistant graph
\[
\mathds{H}_0=\left\{\{i,j\}\colon i\in [2,k+1],\ j\in [k+2,2k]\right\}\cup \{(1,2),(1,3)\}.
\]
Let $\ha=\hg(n,k)(1)$ and  $\hb=\hg(n,k)(\bar{1})$. Clearly, $\ha,\hb$ are shifted ad extremis for $\tau(\hb)\geq 2$ with the shift-resistant graph
\[
\mathds{H}_1= \left\{\{i,j\}\colon i\in [2,k+1],\ j\in [k+2,2k]\right\}.
\]
and the 2-cover graph $\hhat_1=\mathds{H}_1 \cup \{(2,3)\}$.

\begin{defn}
We say that $\hhat$ is {\it partially shifted} if $(i,j)\in \hhat$ and $(x,j)\notin \hhat$ imply $S_{xj}(i,j)\in \hhat$. I.e., if $i<x$ then $(i,x)\in \hhat$; if $i>x$ then $(x,i)\in \hhat$.
\end{defn}

The reason to consider  $\hhat$ instead of $\mathds{H}$ is that $\hhat$ is partially shifted.

\begin{prop}\label{prop1.3}
Suppose that $\ha\subset \binom{[n]}{\ell}$, $\hb\subset \binom{[n]}{k}$ are cross-intersecting that were shifted ad extremis with respect to $\tau(\hb)\geq 2$, saturated with respect to $\ha$, but not both initial. If $n\geq \ell+k$, then $\hhat$ is partially shifted.
\end{prop}
\begin{proof}
Suppose for contradiction that $(x,y)\in \hhat$, $(z,y)\notin \hhat$ but $S_{zy}(x,y)\notin \hhat$. By the definition, we have $\hb(\bar{x},\bar{y})=\emptyset$.  Since $\ha,\hb$ are saturated cross-intersecting with respect to $\ha$, it follows that $\ha(x,y)$ is full, that is, $\ha(x,y)=\binom{[n]\setminus\{x,y\}}{\ell-2}$. Since $(z,y)\notin \hhat$, by the definition $S_{zy}(\ha)=\ha$, it follows that $\ha(x,z)$ is also full. Then by $\{x,z\}\notin \hhat$ we infer $\hb(\bar{x},\bar{z})\neq \emptyset$, contradicting the cross-intersecting property. Thus the proposition is proven.
\end{proof}

\begin{lem}\label{lem1.2}
If $\hhat$ is partially shifted and triangle-free, then $\hhat$ is a complete bipartite graph on partite sets $X$ and $Y$, $X\cup Y=[|X|+|Y|]$.
\end{lem}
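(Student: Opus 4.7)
The plan is to proceed in three stages: first show that the non-isolated vertices of $\hhat$ form an initial segment $[m]$, then establish a key lemma stating that for any edge $\{i,j\} \in \hhat$ and any non-isolated vertex $k \notin \{i,j\}$, exactly one of $\{i,k\}, \{j,k\}$ lies in $\hhat$, and finally deduce the complete bipartite structure by taking $X = N(j)$ and $Y = N(i)$ for any fixed edge $\{i,j\}$.

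For the initial-segment step, given a non-isolated vertex $v \geq 2$, I would pick any neighbor $u$ of $v$ and apply partial shifting to the edge $\{u,v\}$ with $x = v-1$: the resulting shifted pair is either already in $\hhat$ or forced into $\hhat$ by partial shiftedness, and in either case this yields an edge incident to $v-1$. Iterating downward gives that the non-isolated vertices form $[m]$ for some $m$.

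The key lemma is the heart of the argument. Triangle-freeness rules out both $\{i,k\}$ and $\{j,k\}$ being edges simultaneously, so only existence needs work. When $k < j$ it is easy: partial shifting applied to $\{i,j\}$ with $x = k$ shows that either $\{k,j\}$ is already an edge or $\{i,k\}$ must be. The main obstacle is the case $k > j$, because partial shifting only pushes endpoints downward and cannot test $\{i,j\}$ against the larger vertex $k$ directly. The trick I would use is to exploit that $k$ is non-isolated: pick any neighbor $q$ of $k$ (where $q \neq i,j$, since otherwise one of the desired edges already exists), and apply partial shifting twice to the edge joining $q$ and $k$, once with $x = j$ and once with $x = i$. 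If both $\{i,k\}$ and $\{j,k\}$ were non-edges, these two applications would produce the edges $\{q,j\}$ and $\{q,i\}$, which together with the assumed edge $\{i,j\}$ create a triangle on $\{i,j,q\}$, contradicting triangle-freeness.

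For the final stage I would fix any edge $\{i,j\}$ and set $X = N(j)$, $Y = N(i)$. Triangle-freeness forces $X \cap Y = \emptyset$ and forbids edges inside either $X$ or $Y$; the key lemma applied to each non-isolated $k \notin \{i,j\}$ places $k$ in $X \cup Y$, so $X \cup Y = [m]$. For $a \in X$ and $b \in Y$ with $a \neq i$ and $b \neq j$, we have $\{a,j\} \in \hhat$ but $\{b,j\} \notin \hhat$ (since $b \notin X = N(j)$); the key lemma applied to the edge $\{a,j\}$ and the non-isolated vertex $b$ then forces $\{a,b\} \in \hhat$. Hence $\hhat$ restricted to $[m]$ is the complete bipartite graph $K_{X,Y}$, with all remaining vertices isolated, as required.
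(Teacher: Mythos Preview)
Your proof is correct and rests on the same mechanism as the paper's---repeated partial shifting to force edges that close a forbidden triangle---but you organize it differently. The paper chooses an edge $(i,j)$ with $i+j$ minimal, shows $i=1$, sets $Y=N(1)$ and $X=N(j)$, and then uses the minimality of $i+j$ together with several ad hoc shift arguments to establish completeness on $X\times Y$, the initial-segment property, and the absence of other edges. You instead prove the initial-segment property first and then isolate a standalone key lemma (for any edge $\{i,j\}$ and any non-isolated $k\notin\{i,j\}$, exactly one of $\{i,k\},\{j,k\}$ is in $\hhat$); once this is in hand, the complete bipartite structure drops out uniformly from $X=N(j)$, $Y=N(i)$ for \emph{any} edge $\{i,j\}$, with no minimality needed. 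Your two-shift trick in the case $k>j$---picking an auxiliary neighbour $q$ of $k$ and shifting $k$ down to both $i$ and $j$ to produce a triangle on $\{i,j,q\}$---is essentially the same manoeuvre the paper uses in its final step to rule out edges $(u,v)$ with $v>p+q$. Your packaging via the key lemma is a bit cleaner and more reusable; the paper's argument is slightly more hands-on but avoids stating the lemma explicitly.
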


\begin{proof}
Let us pick $(i,j)\in \hhat$ with $i+j$ minimal.

\begin{claim}
$i=1$.
\end{claim}
\begin{proof}
Indeed, if $i\geq 2$ then both $(1,i),(1,j)\notin \hhat$. By partial-shiftedness, $S_{1i}(i,j)=(1,j)\in \hhat$ and $S_{1j}(i,j)=(1,i)\in \hhat$, a contradiction.
\end{proof}

Let $Y=(y_1,\ldots,y_p)$ ($y_1<y_2<\ldots<y_p$) be the neighbors of $1$ in $\hhat$, $y_1=j$ and let $X=(x_1,\ldots,x_q)$ be the neighbors of $j=y_1$, $x_1=1$.

\begin{claim}
$\{x_u,y_v\}\in \hhat$ for all $1\leq u\leq q$, $1\leq v\leq p$.
\end{claim}
\begin{proof}
By symmetry assume $x_u<y_v$. If $x_u=1$, the statement holds by the definition. Assume $1<x_u$. If $(x_u,y_v)\notin \hhat$, then by partial-shiftedness $S_{x_uy_v}(1,y_v)=(1,x_u)\in \hhat$, implying that $(1,x_u,y_1)$ is a triangle of $\hhat$, contradiction.
\end{proof}

Since $\hhat$ is triangle-free, $X\cap Y=\emptyset$ and we proved that the graph $\hhat$ is complete on $X\times Y$. Let us show that $X\cup Y=[p+q]$. Indeed, otherwise we can find $z,w$ with $1<z<w$ and $z\notin X\cup Y$, $w\in X\cup Y$. Suppose by symmetry $w\in Y$. Then $(1,z)\notin \hhat$, $(y_1,z)\notin \hhat$. Note also that $\{y_1,w\}\notin \hhat$. If $y_1>w$ then $(1,y_1)\in \hhat$ implies $S_{w y_1}(1,y_1)=(1,w)\in \hhat$, a contradiction. Thus we may assume $y_1<w$.
 Should $(z,w)\in \hhat$ hold we infer $S_{y_1w}(z,w)=(y_1,z)\in \hhat$, a contradiction. Thus $(z,w)\notin \hhat$ and thereby $S_{zw}(1,w)=(1,z)\in \hhat$, a contradiction.

Let us show next that there are no edges $(u,v)\in \hhat$ outside $X\times Y$. Since any pair $(u,v)\subset Z$ for $Z=X$ or $Y$ would create a triangle, we may assume $v>p+q$. Now $1< y_1\leq p+q<v$ and the partial-shiftedness imply that $S_{1v}(u,v)=(1,u)\in \hhat$ and $S_{y_1v}(u,v)=\{y_1,u\}\in \hhat$. It follows that $(1,y_1,u)$ forms a triangle, contradiction. Thus $\hhat$ is a complete bipartite graph on partite sets $X$ and $Y$, $X\cup Y=[p+q]$.
\end{proof}

\begin{prop}\label{prop-5.4}
Suppose that $\ha\subset \binom{[n]}{\ell}$, $\hb\subset \binom{[n]}{k}$ are cross-intersecting that were shifted ad extremis with respect to $\tau(\hb)\geq 2$, saturated with respect to $\ha$, but not both initial. If  $\hhat$ is triangle-free, then  $\hhat$ is a complete bipartite graph on partite sets $X$ and $Y$, $X\cup Y=[|X|+|Y|]$, $2\leq |X|\leq k$, $2\leq |Y|\leq k$.
\end{prop}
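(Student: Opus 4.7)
The plan is to combine the two preceding results and then verify the size bounds using partial shiftedness. First, since $\hf, \hg$ satisfy all the hypotheses of Proposition \ref{prop1.3} (non-trivial cross-intersecting, shifted ad extremis with respect to $\tau(\hf) \geq 2$, saturated pair, not both initial), one concludes that $\hhat$ is partially shifted. Combined with the triangle-free assumption, Lemma \ref{lem1.2} immediately gives that $\hhat$ is a complete bipartite graph on partite sets $X$ and $Y$ with $X \cup Y = [|X| + |Y|]$. It remains to establish the bounds $2 \leq |X|, |Y| \leq k$.

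For the upper bound, fix any $x \in X$. Since $(x, y) \in \hhat$ for every $y \in Y$ encodes $\hf(\bar{x}, \bar{y}) = \emptyset$, each $F \in \hf$ not containing $x$ must contain all of $Y$. If $|Y| > k$, no such $F$ exists, so $\hf$ is a star at $x$, contradicting $\tau(\hf) \geq 2$. By symmetry, $|X| \leq k$.

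For the lower bound, suppose toward a contradiction that $|X| = 1$, so $X = \{1\}$ and $\hhat = \{\{1, y\} : y \in Y\}$. The upper bound $|Y| \leq k$ combined with $n > k + \ell$ ensures $[2, n] \setminus Y \neq \emptyset$; pick $x$ in this set and any $y_0 \in Y$. Since $x \neq 1$, the pair $\{x, y_0\}$ contains no element of $X$, so $(x, y_0) \notin \hhat$. Applying the partial shiftedness of $\hhat$ to the edge $(1, y_0) \in \hhat$ and the non-edge $(x, y_0) \notin \hhat$ (with $x \neq 1, y_0$) forces $\{1, x\} \in \hhat$, which means $x \in Y$; this contradicts the choice $x \notin Y$. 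Hence $|X| \geq 2$, and by the symmetric argument $|Y| \geq 2$.

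The crux is the use of partial shiftedness in the lower bound step: once Proposition \ref{prop1.3} is invoked to guarantee $\hhat$ is partially shifted, the contradiction is immediate, so no heavy machinery is needed beyond the preceding results.
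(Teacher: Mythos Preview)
Your proof is correct. The setup (invoking Proposition~\ref{prop1.3} and Lemma~\ref{lem1.2}) and the upper bound argument $|X|,|Y|\le k$ match the paper's essentially verbatim: the paper records the latter as the claim that every $F\in\hf$ contains $X$ or $Y$, and then uses $\tau(\hf)\ge 2$ to find witnesses forcing $|X|,|Y|\le k$.

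Where you diverge is in the lower bound $|X|,|Y|\ge 2$. The paper appeals to Proposition~\ref{prop-2.10} to conclude that the shift-resistant graph $\hh$ already contains a $K_{2,2}$; since $\hh\subset\hhat$, this forces both partite classes to have size at least~$2$. Your argument bypasses Proposition~\ref{prop-2.10} entirely and works directly with the partial shiftedness of $\hhat$: if $X=\{1\}$, then picking $x\notin X\cup Y$ and any $y_0\in Y$, the non-edge $(x,y_0)$ forces, via partial shiftedness applied to the edge $(1,y_0)$, the edge $(1,x)\in\hhat$, contradicting $x\notin Y$. This is a cleaner and more self-contained route; it avoids the somewhat delicate case analysis inside Proposition~\ref{prop-2.10} (and the minor mismatch that Proposition~\ref{prop-2.10} is stated for $\hp=\{\tau(\hf)\ge 2,\ \tau(\hg)\ge 2\}$ rather than just $\tau(\hf)\ge 2$). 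The price is that you establish $K_{2,2}\subset\hhat$ rather than the stronger $K_{2,2}\subset\hh$, but that stronger fact is not needed here. One small point worth making explicit: the identification $X=\{1\}$ when $|X|=1$ relies on $1\in X$, which comes from the proof of Lemma~\ref{lem1.2}; alternatively, the roles of $X,Y$ are symmetric so one may assume this without loss of generality.
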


\begin{proof}
By Lemma \ref{lem1.2} and Proposition \ref{prop1.3}, we are left to show that $2\leq |X|\leq k$, $2\leq |Y|\leq k$.

Let us prove $\min\{|X|,|Y|\}\geq 2$ first. The opposite would mean that $\hhat$ is a star. Thus it is sufficient to prove that $\hhat$ contains two independent edges. To this end fix an edge $(i,j)$ of type (b). Here is the place that we use that $\ha,\hb$ are not both initial.

Since $S_{ij}(\hb)$ is a star but $\hb$ is not, we may choose $K,L\in \hb$ with $K\cap (i,j)=\{i\}$,  $L\cap (i,j)=\{j\}$. Fix such a pair $K,L$ with $|K\cap L|$ maximal. Note that $\hb(i)\cap \hb(j)=\emptyset$ implies $|K\cap L|\leq k-2$. Pick $x\neq i$, $x\in K\setminus L$, $y\neq j$, $y\in L\setminus K$. We claim that $\{x,y\}$ is not of type (a). Indeed, if $x<y$ and $(x,y)$ is of type (a), then $S_{xy}(\hb)=\hb$ implies $(L\setminus\{y\})\cup\{x\}=:L'\in \hb$. But $|K\cap L'|=|K\cap L|+1$, a contradiction.

Similarly, $y<x$ and $S_{yx}(\hb)=\hb$ would imply $(K\setminus\{x\})\cup \{y\}=K'\in \hb$. Again, $|K'\cap L|=|K\cap L|+1$ provides the contradiction. Consequently, $\{x,y\}$ is of type (b), whence an edge of $\hhat$. Thus $\hhat$ contains two independent edges and therefore $\min\{|X|,|Y|\}\geq 2$.

Next we show that $\max\{|X|,|Y|\}\leq k$.


\begin{claim}\label{claim-6.7}
For every $B\in \hb$ either $X\subset B$ or $Y\subset B$.
\end{claim}
\begin{proof}
In the opposite case we may choose $x\in X\setminus B$, $y\in Y\setminus B$. Since $\{x,y\}\in\hhat$, $\ha(x,y)$ is full. Now $B\cap \{x,y\}=\emptyset$ contradicts the cross-intersecting property.
\end{proof}

Using the non-triviality of $\hb$, there are members $B, B'\in \hb$ with $X\subset B$, $Y\subset B'$. In particular, $|X|\leq k$ and $|Y|\leq k$. This concludes the proof of the proposition.
\end{proof}


\section{The case that $\hht^{(3)}(\hf)$ is non-trivial}

In this section, we prove an important special case of our main result.

\begin{thm}\label{thm-case1}
Let $\hf\subset \binom{[n]}{k}$ be an intersecting family with $n> 2k$, $\tau(\hf)\geq 3$ and $|\hf|$ maximal. Suppose that $\hf$ is shifted ad extremis with a non-empty shift-resistant graph $\mathds{H}$, and $\hht^{(3)}(\hf)$ is non-trivial. Then $|\hf|<|\hg(n,k)|$ for $k\geq 7$.
\end{thm}

For the proof of Theorem \ref{thm-case1}, we need the following computational lower bounds for $|\hg(n,k)|$.

\begin{lem}
For $n\geq 4k$ and $k\geq 7$,
\begin{align}
&|\hg(n,k)|>3\left(\binom{n-6}{k-2}-\binom{n-k-6}{k-2}\right)+\binom{n-3}{k-3}+3\binom{n-4}{k-3}
 +6\binom{n-5}{k-3},\label{ineq-key16} \\[3pt]
&|\hg(n,k)|>4\left(\binom{n-5}{k-2}-\binom{n-k-5}{k-2}\right)+\binom{n-3}{k-3}+3\binom{n-4}{k-3}
 +4\binom{n-5}{k-3}.\label{ineq-key17}
\end{align}
\end{lem}

\begin{proof}
For $k\geq 7$, we have
\begin{align}
|\hg(n,k)|> \binom{n-1}{k-1}-\binom{n-k}{k-1}-\binom{n-k-1}{k-1}+\binom{n-2k}{k-1}&= \sum_{i=2}^{k} \left(\binom{n-i}{k-2} -\binom{n-k-i}{k-2}\right)\nonumber \\[3pt]
&\geq \sum_{i=2}^{7} \left(\binom{n-i}{k-2} -\binom{n-k-i}{k-2}\right).\label{ineq-new5.25}
\end{align}

First we prove \eqref{ineq-key16}. The RHS of \eqref{ineq-key16} is equal to
 \begin{align*}
\sum_{2\leq i\leq 4}&\left(\binom{n-i}{k-2}-\binom{n-k-i}{k-2}\right)+\binom{n-4}{k-3}
 +3\binom{n-5}{k-3}-3\binom{n-6}{k-3}\\[3pt]
 &\qquad\qquad +\binom{n-k-3}{k-3}+2\binom{n-k-4}{k-3}+3\binom{n-k-5}{k-3}+3\binom{n-k-6}{k-3}.
 \end{align*}
 By $k\geq 7$, it is at most
 \begin{align*}
  &\quad \sum_{2\leq i\leq 4}\left(\binom{n-i}{k-2}-\binom{n-k-i}{k-2}\right)+\binom{n-4}{k-3}
 +3\binom{n-5}{k-3}-3\binom{n-6}{k-3}+\binom{n-10}{k-3}\\[3pt]
 &\qquad\qquad+2\binom{n-11}{k-3}+3\binom{n-12}{k-3}+3\binom{n-13}{k-3}\\[3pt]
 &\leq \sum_{2\leq i\leq 4}\left(\binom{n-i}{k-2}-\binom{n-k-i}{k-2}\right)+\binom{n-4}{k-3}
 +3\binom{n-5}{k-3}+3\binom{n-12}{k-3}+3\binom{n-13}{k-3}.
 \end{align*}
  By \eqref{ineq-new5.25}, it suffices to show that
  \begin{align*}
  \binom{n-4}{k-3}
 +3\binom{n-5}{k-3}+3\binom{n-12}{k-3}+3\binom{n-13}{k-3}\leq \sum_{5\leq i\leq 7}\left(\binom{n-i}{k-2}-\binom{n-k-i}{k-2}\right).
  \end{align*}
For $n\geq 4k$, by \eqref{ineq-key13} we have
\begin{align*}
&\binom{n-6}{k-3}+3\binom{n-9}{k-3}\geq \left(\left(\frac{3}{4}\right)^2+3\left(\frac{3}{4}\right)^5\right)\binom{n-4}{k-3}> \binom{n-4}{k-3}
\end{align*}
and
\begin{align*}
2\binom{n-7}{k-3}+3\binom{n-8}{k-3}+3\binom{n-10}{k-3}& \geq \left(2\left(\frac{3}{4}\right)^2+3\left(\frac{3}{4}\right)^3
+3\left(\frac{3}{4}\right)^5\right)\binom{n-5}{k-3}> 3\binom{n-5}{k-3}.
\end{align*}
Adding the above two inequalities, we get
\[
\binom{n-4}{k-3}+3\binom{n-5}{k-3}<\binom{n-6}{k-3}+2\binom{n-7}{k-3}+3\binom{n-8}{k-3}
+3\binom{n-9}{k-3}+3\binom{n-10}{k-3}.
\]
Therefore,
\begin{align*}
 &\quad\ \binom{n-4}{k-3}
 +3\binom{n-5}{k-3}+3\binom{n-12}{k-3}+3\binom{n-13}{k-3}\\[3pt]
 &\leq \binom{n-6}{k-3}+2\binom{n-7}{k-3}
+3\binom{n-8}{k-3}+3\binom{n-9}{k-3}+3\binom{n-10}{k-3}+3\binom{n-12}{k-3}
+3\binom{n-13}{k-3}\\[3pt]
 &\leq \binom{n-6}{k-3}+2\binom{n-7}{k-3}
+3\binom{n-8}{k-3}+3\binom{n-9}{k-3}+3\binom{n-10}{k-3}+3\binom{n-12}{k-3}
+3\binom{n-11}{k-3}\\[3pt]
&\leq \sum_{5\leq i\leq 7}\sum_{1\leq j\leq 7} \binom{n-i-j}{k-3}\leq \sum_{5\leq i\leq 7}\left(\binom{n-i}{k-2}-\binom{n-k-i}{k-2}\right)
\end{align*}
and \eqref{ineq-key16} follows.

For \eqref{ineq-key17}, by $k\geq 7$ the RHS of \eqref{ineq-key17} is at most
\begin{align*}
&\sum_{2\leq i\leq 5}\left(\binom{n-i}{k-2}-\binom{n-k-i}{k-2}\right)+\binom{n-4}{k-3}
 +\binom{n-5}{k-3}+\binom{n-k-3}{k-3}+2\binom{n-k-4}{k-3}\\[3pt]
 &\qquad\qquad+3\binom{n-k-5}{k-3}\\[3pt]
\leq &\sum_{2\leq i\leq 5}\left(\binom{n-i}{k-2}-\binom{n-k-i}{k-2}\right)+\binom{n-4}{k-3}
 +\binom{n-5}{k-3}+\binom{n-10}{k-3}+2\binom{n-11}{k-3}+3\binom{n-12}{k-3}.
 \end{align*}
 Then it suffices to prove that
 \[
 \binom{n-4}{k-3}
 +\binom{n-5}{k-3}+\binom{n-10}{k-3}+2\binom{n-11}{k-3}+3\binom{n-12}{k-3}\leq \sum_{6\leq i\leq 7}\left(\binom{n-i}{k-2}-\binom{n-k-i}{k-2}\right).
 \]
 For $n\geq 4k$, by \eqref{ineq-key13} we infer that
\begin{align*}
&\frac{4}{7}\binom{n-7}{k-3}+\binom{n-8}{k-3}+\binom{n-9}{k-3}+\binom{n-10}{k-3}
+\frac{4}{9}\binom{n-13}{k-3}\\[3pt]
&\qquad\qquad\geq \left(\frac{4}{7}\times\left(\frac{3}{4}\right)^3+\left(\frac{3}{4}\right)^4+\left(\frac{3}{4}\right)^5
+\left(\frac{3}{4}\right)^6+\frac{4}{9}\times\left(\frac{3}{4}\right)^9\right)\binom{n-4}{k-3}
>\binom{n-4}{k-3},\\[3pt]
&\frac{3}{7}\binom{n-7}{k-3}+\binom{n-8}{k-3}+\binom{n-9}{k-3}
+\frac{2}{9}\binom{n-13}{k-3}\\[3pt]
&\qquad\qquad\geq \left(\frac{3}{7}\times\left(\frac{3}{4}\right)^2+\left(\frac{3}{4}\right)^3+\left(\frac{3}{4}\right)^4
+\frac{2}{9}\times\left(\frac{3}{4}\right)^8\right)\binom{n-5}{k-3}
>\binom{n-5}{k-3},\\[3pt]
&\frac{4}{3}\binom{n-13}{k-3}\geq
\frac{4}{3}\times\frac{3}{4}\binom{n-12}{k-3}=\binom{n-12}{k-3}.
\end{align*}
Adding the above three inequalities, we get
\begin{align*}
\binom{n-4}{k-3}&+\binom{n-5}{k-3}+\binom{n-12}{k-3}\\[3pt]
&<\binom{n-7}{k-3}
+2\binom{n-8}{k-3}
+2\binom{n-9}{k-3}+\binom{n-10}{k-3}+2\binom{n-13}{k-3}.
\end{align*}
Thus,
 \begin{align*}
 &\,\binom{n-4}{k-3}
 +\binom{n-5}{k-3}+\binom{n-10}{k-3}+2\binom{n-11}{k-3}+3\binom{n-12}{k-3}\\[3pt]
 \leq&\,\binom{n-7}{k-3}+2\binom{n-8}{k-3}+2\binom{n-9}{k-3}
+2\binom{n-10}{k-3}+2\binom{n-11}{k-3}+2\binom{n-12}{k-3}+2\binom{n-13}{k-3}\\[3pt]
 \leq&\,\sum_{6\leq i\leq 7}\sum_{1\leq j\leq 7} \binom{n-i-j}{k-3}\leq \sum_{6\leq i\leq 7}\left(\binom{n-i}{k-2}-\binom{n-k-i}{k-2}\right)
 \end{align*}
 and \eqref{ineq-key17} follows.
\end{proof}

\begin{lem}
For $n\geq 2k$ and $k\geq 7$,
\begin{align}
&\prod_{2\leq i\leq k-1}\frac{t+k-6+i}{t+i} >\left(\frac{2t+3k-11}{2t+k+1}\right)^{k-2},\label{ineq-key10-4}\\[3pt]
&\prod_{2\leq i\leq k-1}\frac{t+k-5+i}{t+i}
>\left(\frac{2t+3k-9}{2t+k+1}\right)^{k-2}.\label{ineq-key10-42}
\end{align}
\end{lem}

\begin{proof}
Note that for $m>d>i>0$,
\begin{align}\label{ineq-key10-5}
\frac{m-d-i}{m-i}\cdot \frac{m-d+i}{m+i} <\left(\frac{m-d}{m}\right)^2.
\end{align}
Equivalently,
\begin{align*}
&\frac{(m-d)^2-i^2}{(m-d)^2}<\frac{m^2-i^2}{m^2}, \mbox{ that is, }\left(\frac{i}{m}\right)^2<\left(\frac{i}{m-d}\right)^2,
\end{align*}
which is true for $m>d>0$.
Applying \eqref{ineq-key10-5} repeatedly with $m=t+\frac{3k}{2}-\frac{11}{2}$ and $d=k-6$, we obtain
\[
\frac{(t+2)(t+3)\ldots (t+k-1)}{(t+k-4)(t+k-3)\ldots (t+2k-7)} <\left(\frac{t+\frac{k}{2}+\frac{1}{2}}{t+\frac{3k}{2}-\frac{11}{2}}\right)^{k-2}
=\left(\frac{2t+k+1}{2t+3k-11}\right)^{k-2}.
\]
and \eqref{ineq-key10-4} follows. Similarly, we can obtain \eqref{ineq-key10-42}.
\end{proof}

\begin{lem}
For $2k+1\leq n\leq 4k$ and $k\geq 7$,
\begin{align}
&\binom{n-6}{k-1}+2\binom{n-6}{k-2}-2\binom{n-6}{k-4}-\binom{n-6}{k-5}>\binom{n-k}{k-1}
+\binom{n-k-1}{k-1},\label{ineq-key14}\\[3pt]
&\binom{n-5}{k-1}-\binom{n-5}{k-4}>\binom{n-k}{k-1}+\binom{n-k-1}{k-1}.\label{ineq-key15}
\end{align}
\end{lem}

\begin{proof}
Let us prove \eqref{ineq-key14} first. Set $t=n-2k$. Then $1\leq t\leq 2k$. Note that
\begin{align*}
\binom{n-6}{k-2}& = \frac{k-1}{n-k-4}\binom{n-6}{k-1}=\frac{k-1}{t+k-4}\binom{n-6}{k-1},\\[3pt]
\binom{n-6}{k-4} &= \frac{(k-1)(k-2)(k-3)}{(n-k-2)(n-k-3)(n-k-4)}\binom{n-6}{k-1}\\[3pt]
&=\frac{(k-1)(k-2)(k-3)}{(t+k-2)(t+k-3)(t+k-4)}\binom{n-6}{k-1},\\[3pt]
\binom{n-6}{k-5} &= \frac{(k-1)(k-2)(k-3)(k-4)}{(n-k-1)(n-k-2)(n-k-3)(n-k-4)}\binom{n-6}{k-1}\\[3pt]
&=\frac{(k-1)(k-2)(k-3)(k-4)}{(t+k-1)(t+k-2)(t+k-3)(t+k-4)}\binom{n-6}{k-1}.
\end{align*}
Moreover,
\[
\binom{n-k-1}{k-1}= \frac{n-2k+1}{n-k}\binom{n-k}{k-1} = \frac{t+1}{t+k}\binom{n-k}{k-1}
\]
and
\[
\frac{\binom{n-6}{k-1}}{\binom{n-k}{k-1}} =\prod_{0\leq i\leq k-2}\frac{n-6-i}{n-k-i}=\prod_{2\leq i\leq k}\frac{t+k-6+i}{t+i}.
\]
Since
\begin{align*}
&\qquad\frac{2(k-1)(k-2)(k-3)}{(t+k-2)(t+k-3)(t+k-4)}+
\frac{(k-1)(k-2)(k-3)(k-4)}{(t+k-1)(t+k-2)(t+k-3)(t+k-4)}\\[3pt]
&=\frac{(k-1)(k-2)(k-3)}{(t+k-2)(t+k-3)(t+k-4)}\left(2+\frac{k-4}{t+k-1}\right)\\[3pt]
&=\frac{(k-1)(k-2)(k-3)(2t+3k-6)}{(t+k-1)(t+k-2)(t+k-3)(t+k-4)},
\end{align*}
we  see that \eqref{ineq-key14} is equivalent to
\begin{align*}
&\left(1+\frac{2(k-1)}{t+k-4}-\frac{(k-1)(k-2)(k-3)(2t+3k-6)}{(t+k-1)(t+k-2)(t+k-3)(t+k-4)}\right)\prod_{2\leq i\leq k}\frac{t+k-6+i}{t+i}> \frac{2t +k+1}{t+k}.
\end{align*}
By moving out the first term $\frac{t+k-4}{t+2}$ and the last term $\frac{t+2k-6}{t+k}$ from  $\prod$, we get
\begin{align*}
&\left(t+3k-6-\frac{(k-1)(k-2)(k-3)(2t+3k-6)}{(t+k-1)(t+k-2)(t+k-3)}\right)\prod_{3\leq i\leq k-1}\frac{t+k-6+i}{t+i}> \frac{(t+2)(2t +k+1)}{t+2k-6}.
\end{align*}
By further moving out the terms $\frac{t+k-3}{t+3}\frac{t+k-2}{t+4}\frac{t+k-1}{t+5}$ from  $\prod$,
\begin{align*}
&\left((t+3k-6)(t+k-1)(t+k-2)(t+k-3)-(k-1)(k-2)(k-3)(2t+3k-6)\right)\\[3pt]
&\cdot\prod_{6\leq i\leq k-1}\frac{t+k-6+i}{t+i}> \frac{(t+2)(t+3)(t+4)(t+5)(2t +k+1)}{t+2k-6}.
\end{align*}
By factorization, we have
\begin{align*}
&(t+3k-6)(t+k-1)(t+k-2)(t+k-3)-(k-1)(k-2)(k-3)(2t+3k-6)\\[3pt]
&=t(t+2k-5)(t+2k-4)(t+2k-3).
\end{align*}
It follows that
\begin{align*}
&\prod_{6\leq i\leq k-1}\frac{t+k-6+i}{t+i}> \frac{(t+2)(t+3)(t+4)(t+5)(2t +k+1)}{t(t+2k-6)(t+2k-5)(t+2k-4)(t+2k-3)}.
\end{align*}
Equivalently,
\begin{align}\label{ineq-10-1}
&\prod_{2\leq i\leq k-1}\frac{t+k-6+i}{t+i}> \frac{(t+k-4)(t+k-3)(t+k-2)(t+k-1)(2t +k+1)}{t(t+2k-6)(t+2k-5)(t+2k-4)(t+2k-3)}.
\end{align}
For $k=7,8,9,10,11$ and $t\leq 2k$, it can be checked  directly that \eqref{ineq-10-1} holds. Thus we may assume $k\geq 12$.

Since $\frac{2t+k+1}{t}=2+\frac{k+1}{t}\leq k+3$ and by $k\geq 12$
\begin{align*}
\frac{(t+k-4)(t+k-3)(t+k-2)(t+k-1)}{(t+2k-6)(t+2k-5)(t+2k-4)(t+2k-3)}&\leq \frac{(3k-4)(3k-3)(3k-2)(3k-1)}{(4k-6)(4k-5)(4k-4)(4k-3)}\\[3pt]
&\leq \frac{32\times 33\times 34\times 35}{42\times 43\times 44\times 45}\approx 0.3514<\frac{2}{5},
\end{align*}
we see that the RHS of \eqref{ineq-10-1} is less than $\frac{2(k+3)}{5}$. By \eqref{ineq-key10-4}, $t\leq 2k$ and  $k\geq 12$, we infer that the LHS of \eqref{ineq-10-1} is greater than
\[
\left(\frac{2t+3k-11}{2t+k+1}\right)^{k-2}\geq \left(\frac{7k-11}{5k+1}\right)^{k-2}\geq \left(\frac{73}{61}\right)^{k-2}.
\]
We prove $\left(\frac{73}{61}\right)^{k-2}> \frac{2(k+3)}{5}$ for $k\geq 12$ by induction.
For $k=12$ we have $\left(\frac{73}{61}\right)^{10}\approx 6.0246 >6=\frac{2\times(12+3)}{5}$. For $k+1\geq 13$, by induction hypothesis
\[
\left(\frac{73}{61}\right)^{k-1} = \left(\frac{73}{61}\right)^{k-2}+\frac{12}{61}\left(\frac{73}{61}\right)^{k-2}>\frac{2(k+3)}{5}+ \frac{12}{61} \left(\frac{73}{61}\right)^{k-2}>\frac{2(k+4)}{5}.
\]

Now  let us prove \eqref{ineq-key15}. Similarly, set $t=n-2k$ and then $1\leq t\leq 2k$. Note that
\begin{align*}
\binom{n-5}{k-4} &=\frac{(k-1)(k-2)(k-3)}{(n-k-1)(n-k-2)(n-k-3)}\binom{n-5}{k-1}=\frac{(k-1)(k-2)(k-3)}{(t+k-1)(t+k-2)(t+k-3)}\binom{n-5}{k-1}.
\end{align*}
Then \eqref{ineq-key15} is equivalent to
\[
\left(1-\frac{(k-1)(k-2)(k-3)}{(t+k-1)(t+k-2)(t+k-3)}\right)\prod_{2\leq i\leq k}\frac{t+k-5+i}{t+i}> \frac{2t +k+1}{t+k}.
\]
By moving the last term $\frac{t+2k-5}{t+k}$ from $\prod$ to the RHS, we obtain
\begin{align}\label{ineq-10-2}
\left(1-\frac{(k-1)(k-2)(k-3)}{(t+k-1)(t+k-2)(t+k-3)}\right)\prod_{2\leq i\leq k-1}\frac{t+k-5+i}{t+i}> \frac{2t +k+1}{t+2k-5}.
\end{align}
For $k=7,8,9,10$ and $t\leq 2k$, it can checked directly that \eqref{ineq-10-2} holds. Thus we may assume $k\geq 11$.

Note that
\[
1-\frac{(k-1)(k-2)(k-3)}{(t+k-1)(t+k-2)(t+k-3)} \geq 1-\frac{k-3}{k}=\frac{3}{k}
\]
and by  \eqref{ineq-key10-42} and $k\geq 11$
\[
\prod_{2\leq i\leq k-1}\frac{t+k-5+i}{t+i} \geq \left(\frac{2t+3k-9}{2t+k+1}\right)^{k-2}\geq \left(\frac{7k-9}{5k+1}\right)^{k-2} \geq \left(\frac{17}{14}\right)^{k-2}.
\]
Moreover, $t\leq 2k$ and $k\geq 11$ imply
\[
\frac{2t +k+1}{t+2k-5}= 2-\frac{3k-11}{t+2k-5} \leq 2-\frac{3k-11}{4k-5}\leq 2-\frac{22}{39}<\frac{3}{2}.
\]
Thus, for $k\geq 11$ it suffices to show $\left(\frac{17}{14}\right)^{k-2} > \frac{k}{2}$. We prove it by induction on $k$.
For $k=11$, we have $\left(\frac{17}{14}\right)^{9}\approx5.7397 > \frac{11}{2}$. For $k+1\geq 12$, by induction hypothesis we have
\[
\left(\frac{17}{14}\right)^{k-1} >\left(\frac{17}{14}\right)^{k-2}+\frac{3}{14}\left(\frac{17}{14}\right)^{k-2}> \frac{k}{2}+\frac{3}{14}\left(\frac{17}{14}\right)^{k-2}>\frac{k+1}{2}.
\]
\end{proof}

Now we are ready to prove  Theorem \ref{thm-case1}.

\begin{proof}[Proof of Theorem \ref{thm-case1}]
By the maximality of $|\hf|$, we infer that $T$ is full in $\hf$ for all $T\in \hht^{(3)}(\hf)$. It follows that $\hht^{(3)}(\hf)$ is intersecting.

\begin{claim}\label{claim-6.4}
There exists $\{a,b,c\},\{d,e,f\}\in \hht^{(3)}(\hf)$ such that $(a,b),(d,e)\in\mathds{H}$, $\{a,c\}$ is a transversal of $S_{ab}(\hf)$, $\{d,f\}$ is a transversal of $S_{de}(\hf)$, $\{a,b,c\}\cap (d,e)=\emptyset$ and $f\in \{a,b,c\}$.
\end{claim}

\begin{proof}
Since $\mathds{H}\neq \emptyset$, let $(a,b)\in \mathds{H}$. Then $\tau(S_{ab}(\hf))\leq 2$. Using $\tau(\hf)\geq 3$ we must have equality. Fix $c$ so that $(a,c)$ is a transversal of $S_{ab}(\hf)$ and note that $\{a,b,c\}$ is a transversal of $\hf$. Hence the maximality of $|\hf|$ implies that $\{a,b,c\}$ is full, i.e., $\{a,b,c\}\subset F\in \binom{[n]}{k}$ implies $F\in \hf$.

Since $\tau(\hf)>2$, $(a,b), (a,c),(b,c)$ are not transversals of $\hf$. Thus for each $x\in \{a,b,c\}$ we can fix $F_x\in \hf$ with $F_x\cap \{a,b,c\} =\{x\}$.

We claim that there is some $(d,e)\in \mathds{H}$ with $\{a,b,c\}\cap (d,e)=\emptyset$. In the opposite case $\hf$ is initial on $[n]\setminus \{a,b,c\}$. Let $E\in \binom{[n]\setminus \{a,b,c\}}{k-1}$ consist of the smallest $(k-1)$ elements of $[n]\setminus \{a,b,c\}$. Then $E\cup\{x\}\prec F_x$ implies $E\cup \{x\}\in \hf$. Now $E\in \hf(a)$, $E\in \hf(b)$ imply $E\cup \{b\}\in S_{ab}(\hf)$. However $(E\cup \{b\})\cap \{a,c\}=\emptyset$, i.e., $\{a,c\}$ is not a transversal of $S_{ab}(\hf)$, a contradiction.

Now $(d,e)\in \mathds{H}$ implies $\tau(S_{de}(\hf))= 2$. Let $\{d,f\}$ be a transversal of $S_{de}(\hf)$. Since $\hht^{(3)}(\hf)$ is intersecting, $\{a,b,c\}\cap (d,e)=\emptyset$ implies $f\in \{a,b,c\}$. Thus the claim holds.
\end{proof}

\begin{claim}\label{claim-6.8}
Suppose that $(x,y)\in \mathds{H}$ and $(x,z)\in \hht^{(2)}(S_{xy}(\hf))$.
If $E\in \hf(x,\bar{y})\cap \hf(\bar{x},y)$ then $z\in E$.
\end{claim}
\begin{proof}
Suppose that $z\notin E$. Then $E\in \hf(x,\bar{y})\cap \hf(\bar{x},y)$ implies $E\cup \{y\} \in S_{xy}(\hf)$. However, $(E\cup \{y\})\cap (x,z)=\emptyset$, contradicting the fact that $(x,z)\in \hht^{(2)}(S_{xy}(\hf))$.
\end{proof}

\begin{fact}
Let $\hf\subset \binom{[n]}{k}$ be an intersecting family with $\tau(\hf)\geq 3$. Then for every $P\in \binom{[n]}{2}$ and $Q\subset [n]\setminus P$,
\begin{align}\label{ineq-key3}
|\hf(P,P\cup Q)| \leq \binom{n-|Q|-2}{k-2}-\binom{n-k-|Q|-2}{k-2}.
\end{align}
\end{fact}
\begin{proof}

Since $\tau(\hf)\geq 3$, there exists $F(P)\in \hf$ such that $P\cap F(P)=\emptyset$. By the intersection property of $\hf$, we infer
\begin{align*}
|\hf(P,P\cup Q)| &\leq \binom{n-|Q|-2}{k-2}- \binom{n-|Q|-2-|F(P)\setminus Q|}{k-2}\\[3pt]
&\leq \binom{n-|Q|-2}{k-2}- \binom{n-k-|Q|-2}{k-2}.
\end{align*}
\end{proof}

We first prove the theorem for $n\geq 4k$.

\begin{prop}\label{prop-6.5}
If $\hht^{(3)}(\hf)$ is non-trivial, then $|\hf|<|\hg(n,k)|$ for $n\geq 4k$ and $k\geq 7$.
\end{prop}

\begin{proof}
By Claim \ref{claim-6.4}, there are $\{a,b,c\}$, $\{d,e,f\} \in \hht^{(3)}(\hf)$ such that $(a,b),(d,e)\in\mathds{H}$, $\{a,c\}$ is a transversal of $S_{ab}(\hf)$, $\{d,f\}$ is a transversal of $S_{de}(\hf)$, $\{a,b,c\}\cap (d,e)=\emptyset$ and $f\in \{a,b,c\}$. Let $(v,w)=\{a,b,c\}\setminus \{f\}$. Since $\hht^{(3)}(\hf)$ is non-trivial, there exists $T\in \hht^{(3)}(\hf)$ such that $f\notin T$. Clearly $T\cap (v,w)\neq\emptyset \neq T\cap (d,e)$.

There are essentially two possibilities for $T$.

 {\bf\noindent Case 1.} $T=\{w,d,g\}$ with $g\notin \{v,w,d,e,f\}$.

 Let $\hp=\{\{v,w,f\},\{d,e,f\},\{w,d,g\}\}\subset \hht^{(3)}(\hf)$ and $U= \{v,w,d,e,f,g\}$. Then for any $R\in \binom{[n]\setminus U}{k-2}$ and $S\in\binom{U}{2}$ with $R\cup S\in \hf$, we have $S\in \hht^{(2)}(\hp)$. It is easy to see that
 \[
 \hht^{(2)}(\hp)=\left\{\{v,d\},\{w,d\},\{w,e\},\{w,f\},\{d,f\},\{f,g\}\right\}.
 \]
 Note that the non-triviality of $\hp$ implies $|F\cap U|\geq 2$ for all $F\in \hf$.
 Define
 \[
 \hf_i=\left\{F\in \hf\colon |F\cap U|=i\right\},\ i=2,3,\ldots,6.
 \]

 \begin{claim}\label{claim-new6.7}
 \begin{align}\label{ineq-hf2}
 |\hf_2| \leq 3\left(\binom{n-6}{k-2}-\binom{n-k-6}{k-2}\right).
 \end{align}
 \end{claim}

 \begin{proof}
 Note that $\hf(\{w,d\},U)$ and  $\hf(\{f,g\},U)$ are cross-intersecting. If one of them is empty, then by \eqref{ineq-key3} we infer
 \[
 |\hf(\{w,d\},U)|+|\hf(\{f,g\},U)| \leq \binom{n-6}{k-2}-\binom{n-k-6}{k-2}.
 \]
 If both of them are non-empty, then by \eqref{ft92}
 \[
 |\hf(\{w,d\},U)|+|\hf(\{f,g\},U)| \leq \binom{n-6}{k-2}-\binom{n-k-4}{k-2}+1< \binom{n-6}{k-2}-\binom{n-k-6}{k-2}.
 \]
 Similarly, we have
 \begin{align*}
 |\hf(\{w,e\},U)|+|\hf(\{d,f\},U)| &\leq \binom{n-6}{k-2}-\binom{n-k-6}{k-2}\\[3pt]
  |\hf(\{w,f\},U)|+|\hf(\{v,d\},U)| &\leq \binom{n-6}{k-2}-\binom{n-k-6}{k-2}.
 \end{align*}
 Adding these inequalities, we obtain \eqref{ineq-hf2}.
 \end{proof}

Let $T\in \binom{U}{3}$. By \eqref{ineq-sperner} we have
\[
|\hf(T,U)| +|\hf(U\setminus T,U)| \leq \binom{n-6}{k-3}.
\]
 It follows that $|\hf_3| \leq \frac{1}{2}\binom{6}{3}\binom{n-6}{k-3}=10\binom{n-6}{k-3}$.
 Hence, by \eqref{ineq-key16} we obtain that
 \begin{align*}
 |\hf|  &= |\hf_2|+|\hf_3|+|\hf_4|+|\hf_5|+|\hf_6|\\[3pt]
 &\leq 3\left(\binom{n-6}{k-2}-\binom{n-k-6}{k-2}\right)+10\binom{n-6}{k-3}
 +15\binom{n-6}{k-4}+6\binom{n-6}{k-5}+\binom{n-6}{k-6}\\[3pt]
 &= 3\left(\binom{n-6}{k-2}-\binom{n-k-6}{k-2}\right)+\binom{n-3}{k-3}+3\binom{n-4}{k-3}
 +6\binom{n-5}{k-3}\\[3pt]
&\overset{\eqref{ineq-key16}}{<}|\hg(n,k)|.
 \end{align*}

 {\bf\noindent Case 2.} $T=\{w,d,e\}$.

 Let $\hp=\{\{v,w,f\},\{d,e,f\},\{w,d,e\}\}\subset \hht^{(3)}(\hf)$ and $U= \{v,w,d,e,f\}$. Then for any $R\in \binom{[n]\setminus U}{k-2}$ and $S\in\binom{U}{2}$ with $R\cup S\in \hf$, we have $S\in \hht^{(2)}(\hp)$. Note that $|U|=5=3+2$ implies $\hht^{(2)}(\hp)=\binom{U}{2}\setminus \{U\setminus P\colon P\in \hp\}$. That is,
 \[
 \hht^{(2)}(\hp)=\left\{\{v,d\},\{v,e\},\{w,d\},\{w,e\},\{w,f\},\{d,f\},\{e,f\}\right\}.
 \]
 Define $\hf_i=\{F\in \hf\colon |F\cap U|=i\}$, $i=2,3,\ldots,6$. Note that $(\hf(\{w,d\}, U), \hf(\{e,f\}, U))$,
 $(\hf(\{w,e\}, U), \hf(\{v,d\}, U))$, $(\hf(\{w,f\}, U), \hf(\{v,e\}, U))$ are three cross-intersecting pairs.  By a similar argument as in the proof of Claim \ref{claim-new6.7}, we obtain that
  \begin{align*}
 |\hf(\{w,d\},U)|+|\hf(\{e,f\},U)| &\leq \binom{n-5}{k-2}-\binom{n-k-5}{k-2},\\[3pt]
 |\hf(\{w,e\},U)|+|\hf(\{v,d\},U)| &\leq \binom{n-5}{k-2}-\binom{n-k-5}{k-2},\\[3pt]
 |\hf(\{w,f\},U)|+|\hf(\{v,e\},U)| &\leq \binom{n-5}{k-2}-\binom{n-k-5}{k-2}.
 \end{align*}
 Moreover, by \eqref{ineq-key3} we have
   \begin{align*}
 |\hf(\{d,f\},U)| &\leq \binom{n-5}{k-2}-\binom{n-k-5}{k-2}.
 \end{align*}
Thus,
 \begin{align}\label{ineq-hf22}
 |\hf_2| =\sum_{P\in \hht^{(2)}(\hp)} |\hf(P,U)| \leq 4\left(\binom{n-5}{k-2}-\binom{n-k-5}{k-2}\right).
 \end{align}
Recall that $\{a,c\}$ is a transversal of $S_{ab}(\hf)$ and $\{d,f\}$ is a transversal of $S_{de}(\hf)$.  We claim that for any $R\in \binom{[n]\setminus U}{k-3}$, at most one of $R\cup \{a,d,e\}$, $R\cup \{b,d,e\}$ is in $\hf$. Indeed, otherwise if $R\cup \{a,d,e\}, R\cup \{b,d,e\}\in \hf$ then $R\cup \{d,e\}\in \hf(a,\bar{b})\cap \hf(\bar{a},b)$ but $c\notin R\cup \{d,e\}$, contradicting Claim \ref{claim-6.8}. Thus for any $R\in \binom{[n]\setminus U}{k-3}$, at most one of $R\cup \{a,d,e\}$, $R\cup \{b,d,e\}$ is in $\hf$. Similarly, at most one of $R\cup \{d,v,w\}$, $R\cup \{e,v,w\}$ is in $\hf$.   It follows that
\begin{align*}
&|\hf(\{a,d,e\},U)|+|\hf(\{b,d,e\},U)|\leq \binom{n-5}{k-3},\\[3pt]
&|\hf(\{d,v,w\},U)|+|\hf(\{e,v,w\},U)|\leq \binom{n-5}{k-3}.
\end{align*}
Consequently,
\[
|\hf_3|=\sum_{B\in \binom{U}{3}} |\hf(B,U)|\leq 8\binom{n-5}{k-3}.
\]
Therefore, by \eqref{ineq-key17} we conclude that
 \begin{align*}
 |\hf| &= |\hf_2|+|\hf_3|+|\hf_4|+|\hf_5|\\[3pt]
 &\leq 4\left(\binom{n-5}{k-2}-\binom{n-k-5}{k-2}\right)+8\binom{n-5}{k-3}
 +5\binom{n-5}{k-4}+\binom{n-5}{k-5}\\[3pt]
 &= 4\left(\binom{n-5}{k-2}-\binom{n-k-5}{k-2}\right)+\binom{n-3}{k-3}+3\binom{n-4}{k-3}
 +4\binom{n-5}{k-3}\\[3pt]
 &\overset{\eqref{ineq-key17}}{<}|\hg(n,k)|.
 \end{align*}
\end{proof}

In Proposition \ref{prop-6.5}, we proved that if  $\hht^{(3)}(\hf)$ is non-trivial then $|\hf|< |\hg(n,k)|$ for $n\geq 4k$ and $k\geq 7$. Let us  prove that  $|\hf|< |\hg(n,k)|$ for $k\geq 7$ in the full range.

\begin{prop}\label{prop-6.9}
If $\hht^{(3)}(\hf)$ is non-trivial, then $|\hf|<|\hg(n,k)|$ for $n\geq 2k+1$ and $k\geq 7$.
\end{prop}

\begin{proof}
By Proposition \ref{prop-6.5}, we may assume that $2k<n< 4k$. By Claim \ref{claim-6.4}, let $\{a,b,c\}$, $\{d,e,f\}$ be the two transversals of $\hf$ with $\{a,b,c\}\cap \{d,e,f\}=\{f\}$ and let $(v,w)=\{a,b,c\}\setminus \{f\}$. Since $\hht^{(3)}(\hf)$ is non-trivial, there exists $T\in \hht^{(3)}(\hf)$ such that $f\notin T$. Clearly $T\cap (v,w)\neq\emptyset \neq T\cap (d,e)$.

 We distinguish two cases as above. However the computation is very much different for $2k<n<4k$.

 {\bf\noindent Case 1.} $T=\{w,d,g\}$ with $g\notin \{v,w,d,e,f\}$.

 Let $\hp=\{\{v,w,f\},\{d,e,f\},\{w,d,g\}\}\subset \hht^{(3)}(\hf)$ and $D= \{v,w,d,e,f,g\}$. Then for any $R\in \binom{[n]\setminus D}{k-2}$ and $S\in\binom{D}{2}$ with $R\cup S\in \hf$, we must have $S\in \hht^{(2)}(\hp)$. Recall that
 \[
 \hht^{(2)}(\hp)=\left\{\{v,d\},\{w,d\},\{w,e\},\{w,f\},\{d,f\},\{f,g\}\right\}.
 \]
 Note that $\hht^{(2)}(\hp)$ can be partitioned into three disjoint pairs: $(\{v,d\},\{w,f\})$,  $(\{w,d\},\{f,g\})$ and  $(\{w,e\},\{d,f\})$.
For any disjoint pair $\{A,A'\}\subset \hht^{(2)}(\hp)$, by \eqref{ineq-sperner} we have
\[
\alpha(A,D)+\alpha(A',D)\leq 1.
\]
Set $B=D\setminus A$ and $B'=D\setminus A'$. Using \eqref{ineq-sperner} again we infer
\begin{align*}
\alpha(A,D)+\alpha(B,D) \leq 1,\
\alpha(A',D)+\alpha(B',D) \leq 1.
\end{align*}
It follows that
\begin{align*}
&\qquad |\hf(A,D)|+|\hf(A',D)|+|\hf(B,D)|+|\hf(B',D)| \\[3pt] &=(\alpha(A,D)+\alpha(A',D))\binom{n-6}{k-2}+(\alpha(B,D)+\alpha(B',D))\binom{n-6}{k-4}\\[3pt]
&\leq \binom{n-6}{k-2}-(1-\alpha(A,D)-\alpha(A',D))\binom{n-6}{k-2}+(2-\alpha(A,D)-\alpha(A',D))\binom{n-6}{k-4}\\[3pt]
&=\binom{n-6}{k-2}+\binom{n-6}{k-4}-(1-\alpha(A,D)-\alpha(A',D))
\left(\binom{n-6}{k-2}-\binom{n-6}{k-4}\right)\\[3pt]
&\leq \binom{n-6}{k-2}+\binom{n-6}{k-4}.
\end{align*}
For any $P\in \binom{D}{2}\setminus \hht^{(2)}(\hp)$, $\hf(P,D)=\emptyset$ implies
\[
|\hf(P,D)|+|\hf(D\setminus P,D)|\leq \binom{n-6}{k-4}.
\]
For $A\in \binom{D}{3}$ just use $\alpha(A,D)+\alpha(D\setminus A,D) \leq 1$. Eventually we obtain
 \begin{align*}
 |\hf|=\sum_{Q\subset D}|\hf(Q,D)| & \leq 3\left(\binom{n-6}{k-2}+\binom{n-6}{k-4}\right)+\left(\binom{6}{4}-6\right)\binom{n-6}{k-4}+\frac{1}{2} \binom{6}{3}\binom{n-6}{k-3}\\[3pt]
 &\qquad +6\binom{n-6}{k-5}+\binom{n-6}{k-6} \\[3pt]
 &\leq 3\binom{n-6}{k-2}+10\binom{n-6}{k-3}
 +12\binom{n-6}{k-4}+6\binom{n-6}{k-5}+\binom{n-6}{k-6}.
\end{align*}
Note that
\[
\binom{n-1}{k-1} = \binom{n-6}{k-1}+5\binom{n-6}{k-2}+10\binom{n-6}{k-3}+10\binom{n-6}{k-4}+5\binom{n-6}{k-5}+\binom{n-6}{k-6}.
\]
 Then by \eqref{ineq-key14}
\[
\binom{n-1}{k-1} -|\hf| \geq \binom{n-6}{k-1}+2\binom{n-6}{k-2}-2\binom{n-6}{k-4}-\binom{n-6}{k-5}>\binom{n-k}{k-1}+\binom{n-k-1}{k-1}
\]
and thereby $|\hf|<|\hg(n,k)|$.

 {\bf\noindent Case 2.} $T=\{w,d,e\}$.

 Set $\hp=\{\{v,w,f\},\{d,e,f\},\{w,d,e\}\}\subset \hht^{(3)}(\hf)$ and $D= \{v,w,d,e,f\}$. Then for any $R\in \binom{[n]\setminus D}{k-2}$ and $S\in\binom{D}{2}$ with $R\cup S\in \hf$, we have $S\in \hht^{(2)}(\hp)$. Recall that
 \[
 \hht^{(2)}(\hp)=\left\{\{v,d\},\{v,e\},\{w,d\},\{w,e\},\{w,f\},\{d,f\},\{e,f\}\right\}.
 \]
 Note that $(\{w,d\},\{e,f\})$,
 $(\{w,e\} ,\{v,d\})$, $(\{w,f\}, \{v,e\})$ are three disjoint pairs. For a disjoint pair $\{A,A'\}$, set $B=D\setminus A$ and $B'=D\setminus A'$.
By a similar argument as in Case 1, we obtain
\begin{align*} |\hf(A,D)|+|\hf(A',D)|+|\hf(B,D)|+|\hf(B',D)|\leq \binom{n-5}{k-2}+\binom{n-5}{k-4}.
\end{align*}
For $\{d,f\}$ just use $\alpha(\{d,f\},D)+\alpha(D\setminus \{d,f\},D) \leq 1$.
This leads to
\begin{align*}
 |\hf|=\sum_{Q\subset D}|\hf(Q,D)| &\leq 4\binom{n-5}{k-2}+6\binom{n-5}{k-3}
 +5\binom{n-5}{k-4}+\binom{n-5}{k-5}.
 \end{align*}
Note that
\[
\binom{n-1}{k-1} = \binom{n-5}{k-1}+4\binom{n-5}{k-2}+6\binom{n-5}{k-3}+4\binom{n-5}{k-4}+\binom{n-5}{k-5}.
\]
Then by \eqref{ineq-key15}
\[
\binom{n-1}{k-1} -|\hf| \geq \binom{n-5}{k-1}-\binom{n-5}{k-4}>\binom{n-k}{k-1}+\binom{n-k-1}{k-1}
\]
and  $|\hf|<|\hg(n,k)|$ follows.
\end{proof}

By Propositions \ref{prop-6.5} and \ref{prop-6.9}, we conclude that Theorem \ref{thm-case1} holds.
\end{proof}

\section{Proofs of Propositions \ref{prop-key1} and \ref{cor-5.9}}

In this section, we prove Propositions \ref{prop-key1}, \ref{cor-5.9} and some inequalities that are needed in the proof of Theorem \ref{thm-main3}.

For the following lemma we need a simple inequality.
\begin{align}\label{ineq-key12}
\left(\frac{3}{2}\right)^d > 2d-3\mbox{ for } d\geq 1.
\end{align}

\begin{proof}
The statement is easily checked for $d=1,2,3$ and 4. For $d+1\geq 5$ we apply induction. As
\[
\left(\frac{3}{2}\right)^{d+1}=\frac{3}{2}\left(\frac{3}{2}\right)^d > 2d-3 +\frac{1}{2}(2d-3)>2d-3 +2=2(d+1)-3,
\]
by the induction hypothesis and $2d-3>4$ (for $d\geq 4$), the proof of \eqref{ineq-key12} is complete.
\end{proof}

Let us prove two analytic inequalities.

\begin{lem}\label{lem-5.5}
Suppose that $n,b,a$ are positive integers, $n\geq a+b$.  Then for $b\geq a+1$,
\begin{align}\label{ineq-key10}
\binom{n}{b}-2\binom{n-a}{b}+\binom{n-2a}{b}+2\geq \binom{n-1}{a-1}+\binom{n-1}{b-1}-\binom{n-a-1}{b-1}+1,
\end{align}
with equality holding iff $n=a+b$ or $a=1$.
For $b\geq a+2$,
\begin{align}\label{ineq-key11}
\binom{n}{b}-2\binom{n-a}{b}+\binom{n-2a}{b}+2\geq \binom{n}{a},
\end{align}
with equality holding iff $n=a+b$ or $a=1, b=3$.
\end{lem}

\begin{proof}
Let us prove \eqref{ineq-key10} first. For $n=a+b$, using $\binom{a+b-1}{a-1}=\binom{a+b-1}{b}$ one sees that both sides of \eqref{ineq-key10} are equal to $\binom{a+b}{b}$. In case of $a=1$, by substituting $a=1$ into \eqref{ineq-key10} and using
\[
\binom{n}{b}-2\binom{n-1}{b}+\binom{n-2}{b}=\binom{n-2}{b-2},
\]
we infer that both sides are equal to $\binom{n-2}{b-2}+2$.

From now  on we assume $n\geq a+b+1$ and $a\geq 2$. We prove \eqref{ineq-key10} with strict inequality, that is,
\begin{align}\label{ineq-new5.8}
\binom{n}{b}-2\binom{n-a}{b}+\binom{n-2a}{b}\geq \binom{n-1}{a-1}+\binom{n-1}{b-1}-\binom{n-a-1}{b-1}.
\end{align}
We distinguish two cases.

{\noindent\bf Case 1.} $b=a+1$.

Rearranging \eqref{ineq-new5.8} yields
\[
\binom{n}{a+1}-\binom{n}{a} \geq \binom{n-a}{a+1}+\binom{n-a-1}{a+1}-\binom{n-2a}{a+1}
\]
By using $\binom{n}{a+1}=\frac{n-a}{a+1}\binom{n}{a}$, $\binom{n-a}{a+1}=\frac{n-a}{a+1}\binom{n-a-1}{a}$ and $\binom{n-a-1}{a+1}=\frac{n-2a-1}{a+1}\binom{n-a-1}{a}$, it suffices to show
\begin{align}\label{ineq-new5.9}
\frac{n-2a-1}{a+1}\binom{n}{a}\geq \frac{2n-3a-1}{a+1}\binom{n-a-1}{a}-\binom{n-2a}{a+1}.
\end{align}

Plugging in $n=2a+2$ yields (note that $\binom{n-2a}{a+1}=0$)
\[
\binom{2a+2}{a}\geq (a+3)\binom{a+1}{a}=(a+1)(a+3).
\]
Equivalently, $(2a+2)!\geq  (a+1)!(a+3)!$. This is true for $a=2$. Therefore $(2a+4)(2a+3)>(a+2)(a+4)$ implies it for all $a>2$ as well.

From now on we assume that $n\geq 2a+3$ and for convenience set $t=n-2a-1$. There are two subcases according the size of $t$.

{\noindent\bf Subcase 1.1} $2\leq t\leq a$.

By \eqref{ineq-new5.9}, it suffices to show that
\begin{align}\label{ineq-new5.10}
\prod_{1\leq i\leq a}\frac{a+1+t+i}{t+i}\geq \frac{a+2t+1}{t}.
\end{align}
In the case $a=t=2$ we have $\frac{6\times 7}{3\times 4} \geq \frac{7}{2}$ and the equality holds. In the case $a=3$ we need
\[
\frac{(t+5)(t+6)(t+7)}{(t+1)(t+2)(t+3)} \geq \frac{2(t+2)}{t},
\]
which is easily verified for both $t=2$ and 3. For the case $a\geq 4$, $2\leq t\leq a$ note that the RHS of \eqref{ineq-new5.10} is maximal for $t=2$ when its value is $\frac{a+5}{2}$. On the LHS, by $t\leq a$
\[
\frac{a+1+t+i}{t+i}\geq \frac{2a+1+t}{a+t} >\frac{3}{2}.
\]
Thus \eqref{ineq-new5.10} follows from $\left(\frac{3}{2}\right)^a\geq \frac{a+5}{2}$. By \eqref{ineq-key12} and $a\geq 4$, we infer
\[
\left(\frac{3}{2}\right)^a>2a-3 > \frac{a+5}{2}
\]
and \eqref{ineq-new5.10} holds.

{\noindent\bf Subcase 1.2} $t\geq a+1$, i.e. $n\geq 3a+2$.

For $a=2$, the LHS of \eqref{ineq-new5.8} equals
\[
\binom{n}{3}-2\binom{n-2}{3}+\binom{n-4}{3}=(n-2)+2(n-3)+(n-4)=4n-12.
\]
The RHS of \eqref{ineq-new5.8} equals
\[
\binom{n-1}{1}+\binom{n-1}{2}-\binom{n-3}{2} =(n-1)+(n-2)+(n-3) =3n-6.
\]
Using $n\geq a+b+1\geq 6$, we see that \eqref{ineq-new5.8} holds. Now assume $a\geq 3$.
Recall the formula
\[
\binom{n-p}{q}-\binom{n-p-a}{q} =\sum_{1\leq j\leq a} \binom{n-p-j}{q-1}.
\]
Applying it twice we obtain
\begin{align}\label{ineq-new5.11}
\binom{n}{a+1}-2\binom{n-a}{a+1}+\binom{n-2a}{a+1}&= \sum_{1\leq i\leq a} \left(\binom{n-i}{a}-\binom{n-a-i}{a}\right)\nonumber\\[3pt]
&=\binom{n-1}{a}-\binom{n-a-1}{a}+\sum_{2\leq i\leq a} \left(\binom{n-i}{a}-\binom{n-a-i}{a}\right).
\end{align}
Using only three of the terms for $i=2$ and $i=3$ we infer that the sum in the bracket is at least
\[
\binom{n-3}{a-1}+\binom{n-4}{a-1}+\binom{n-5}{a-1}+\binom{n-4}{a-1}+\binom{n-5}{a-1}
+\binom{n-6}{a-1}>\binom{n-3}{a-1}+2\binom{n-4}{a-1}.
\]
Since $n\geq (\ell-1)a$ implies $\frac{n-a-\ell+2}{n-\ell+1}\geq \frac{n-a}{n}$, by \eqref{ineq-key1} and $n\geq 3a$ we infer
\[
\frac{\binom{n-\ell}{a-1}}{\binom{n-1}{a-1}}\geq \left(\frac{n-a-\ell+2}{n-\ell+1}\right)^{\ell-1} \geq  \left(\frac{n-a}{n}\right)^{\ell-1}\geq \left(\frac{2}{3}\right)^{\ell-1},\  \ell=3,4.
\]
Using $\left(\frac{2}{3}\right)^2+2\left(\frac{2}{3}\right)^3=\frac{28}{27}>1$, it follows that \eqref{ineq-new5.11} is more than
\[
\binom{n-1}{a}-\binom{n-a-1}{a}+\binom{n-1}{a-1},
\]
implying \eqref{ineq-new5.8}.

{\noindent\bf Case 2.} $b\geq a+2\geq 4$, $n\geq b+a+1$.

We apply induction on $n$ in which we assume that \eqref{ineq-new5.8} holds for the triples $(n-1,b,a)$ and $(n-1,b-1,a)$ to prove it for the triple $(n,b,a)$.

Let $\delta(x,y)$ be the Kronecker symbol,
\[
\delta(x,y)=\left\{
                \begin{array}{ll}
                  1, & \hbox{ if }x=y; \\[3pt]
                  0, & \hbox{ if }x\neq y.
                \end{array}
              \right.
\]
The equality we want to prove is
\[
\binom{n}{b}-2\binom{n-a}{b}+\binom{n-2a}{b}+\delta(n,a+b) \geq \binom{n-1}{a-1}+\binom{n-1}{b-1}-\binom{n-a-1}{b-1}.
\]
By the induction hypothesis and $b\geq a+2$ we may use the instances $(n-1,b,a)$ and $(n-1,b-1,a)$:
\begin{align*}
&\binom{n-1}{b}-2\binom{n-1-a}{b}+\binom{n-1-2a}{b}+\delta(n-1,a+b) \geq \binom{n-2}{a-1}+\binom{n-2}{b-1}-\binom{n-a-2}{b-1},\\[3pt]
&\binom{n-1}{b-1}-2\binom{n-1-a}{b-1}+\binom{n-1-2a}{b-1}+\delta(n-1,a+b-1) \geq \binom{n-2}{a-1}+\binom{n-2}{b-2}-\binom{n-a-2}{b-2}.
\end{align*}
Adding them we infer ($n>a+b$ implies $\delta(n-1,a+b-1)=0$)
\begin{align}\label{ineq-new5.12}
&\binom{n}{b}-2\binom{n-a}{b}+\binom{n-2a}{b}+\delta(n-1,a+b)\geq 2\binom{n-2}{a-1}+\binom{n-1}{b-1}-\binom{n-a-1}{b-1}.
\end{align}
Since $n\geq a+b>2a$, we infer
\[
2\binom{n-2}{a-1} \geq 2\frac{n-a}{n-1} \binom{n-1}{a-1}> \frac{2(n-a)}{n}\binom{n-1}{a-1}>\binom{n-1}{a-1}.
\]
Thus \eqref{ineq-key10} follows from \eqref{ineq-new5.12}.

Now we prove \eqref{ineq-key11} by a similar approach. For $n=a+b$, both sides of \eqref{ineq-key11} equal $\binom{a+b}{b}$. For $a=1$, by substituting $a=1$ into \eqref{ineq-key11} we get $\binom{n-2}{b-2}+2\geq n$. Since  $b\geq a+2\geq 3$ and $n\geq a+b$ imply $\binom{n-2}{b-2}\geq n-2$,  \eqref{ineq-key11} follows. From now  on we assume $n\geq a+b+1$, $a\geq 2$ and we prove \eqref{ineq-key11} with strict inequality. We distinguish two cases.

{\noindent\bf Case 1.} $b=a+2$.

Then \eqref{ineq-key11} is equivalent to
\begin{align*}
\binom{n}{a+2}-2\binom{n-a}{a+2}+\binom{n-2a}{a+2}+2\geq \binom{n}{a}.
\end{align*}
Note that $n\geq a+b+1\geq 2a+3$. For convenience set $t=n-2a-2$. There are two subcases according the size of $t$.

{\noindent\bf Subcase 1.1} $1\leq t\leq a-1$.

Then by $\binom{n-2a}{b} =\binom{t+2}{b}=0$, we need to show that
\begin{align}\label{ineq-new5.15}
\binom{n}{a+2}-\binom{n}{a}\geq 2\binom{n-a}{a+2}.
\end{align}
Note that
\[
\binom{n}{a}= \frac{(a+1)(a+2)}{(n-a-1)(n-a)}\binom{n}{a+2}.
\]
After rearranging \eqref{ineq-new5.15} is equivalent to
\begin{align}\label{ineq-new5.16}
\prod_{1\leq i\leq a+2}\frac{a+t+i}{t+i}\geq \frac{2(a+t+2)(a+t+1)}{(a+t+2)(a+t+1)-(a+2)(a+1)}.
\end{align}
For $a=2$ and $t=1$, we have $\frac{4\times 5\times 6\times 7}{2\times 3\times 4\times 5}=7>5=\frac{2\times 4\times 5}{4\times 5-3\times 4}$ and \eqref{ineq-new5.16} holds.
In case of $a=3$
we need
\[
\frac{(t+4)(t+5)(t+6)(t+7)(t+8)}{(t+1)(t+2)(t+3)(t+4)(t+5)} \geq \frac{2(t+5)(t+4)}{(t+5)(t+4)-20},
\]
which is easily verified for both $t=1$ and 2. Note that the RHS of \eqref{ineq-new5.16} is maximal for $t=1$ when its value is $a+3$. On the LHS, by $t\leq a-1$ we infer for $i\leq a+1$
\[
\frac{a+t+i}{t+i}\geq \frac{2a+1+t}{a+1+t} \geq \frac{3}{2}.
\]
Thus \eqref{ineq-new5.16} will follow from $\left(\frac{3}{2}\right)^{a+1}> a+3$. By \eqref{ineq-key12} and $a\geq 4$,
\[
\left(\frac{3}{2}\right)^{a+1}>2(a+1)-3 =2a-1\geq a+3.
\]

{\noindent\bf Subcase 1.2} $t\geq a$, i.e. $n\geq 3a+2$.

Note that for $a\geq 2$
\begin{align}\label{ineq-new5.17}
\binom{n}{a+2}-2\binom{n-a}{a+2}+\binom{n-2a}{a+2}
= \sum_{1\leq i\leq a} \sum_{1\leq j\leq a} \binom{n-i-j}{a}>  \binom{n-2}{a}+2\binom{n-3}{a}.
\end{align}
By \eqref{ineq-key1} and $n\geq 3a+2$, we infer for $\ell \leq 3$
\[
\frac{\binom{n-\ell}{a}}{\binom{n}{a}}\geq \left(\frac{n-a-\ell+1}{n-\ell+1}\right)^{\ell} \geq  \left(\frac{n-a-2}{n-2}\right)^{\ell}\geq \left(\frac{2}{3}\right)^{\ell},\  \ell=2,3.
\]
Using $\left(\frac{2}{3}\right)^2+2\left(\frac{2}{3}\right)^3=\frac{28}{27}>1$, we obtain that \eqref{ineq-new5.17} is greater than $\binom{n}{a}$ and \eqref{ineq-key11} holds.

{\noindent\bf Case 2.} $b\geq a+3\geq 5$, $n\geq b+a+1$.

We apply induction on $n$ in which we assume that \eqref{ineq-new5.8} holds for the triples $(n-1,b,a)$ and $(n-1,b-1,a)$ to prove it for the triple $(n,b,a)$. The equality we want to prove is
\[
\binom{n}{b}-2\binom{n-a}{b}+\binom{n-2a}{b}+2\delta(n,a+b) \geq \binom{n}{a}.
\]

By the induction hypothesis and $b\geq a+3$ we may use the instances $(n-1,b,a)$ and $(n-1,b-1,a)$:
\begin{align*}
&\binom{n-1}{b}-2\binom{n-1-a}{b}+\binom{n-1-2a}{b}+2\delta(n-1,a+b) \geq \binom{n-1}{a},\\[3pt]
&\binom{n-1}{b-1}-2\binom{n-1-a}{b-1}+\binom{n-1-2a}{b-1}+2\delta(n-1,a+b-1) \geq \binom{n-1}{a},
\end{align*}
Since $n\geq a+b+1$ implies $\delta(n-1,a+b-1)=0$,
adding them we infer
\begin{align}\label{ineq-new5.18}
&\binom{n}{b}-2\binom{n-a}{b}+\binom{n-2a}{b}+2\delta(n-1,a+b) \geq 2\binom{n-1}{a}.
\end{align}
Since $n\geq a+b> 2a+2$ and $a\geq 2$, we infer
\[
\frac{1}{a+1}\binom{n}{a} \geq \frac{1}{a+1}\binom{2a+2}{2}= 2a+1>2.
\]
It follows that
\[
2\binom{n-1}{a} \geq 2\frac{n-a}{n} \binom{n}{a}\geq  \frac{2(a+2)}{2a+2} \binom{n}{a}\geq \left(1+\frac{1}{a+1}\right)\binom{n}{a}\geq \binom{n}{a}+2.
\]
Thus \eqref{ineq-key11} follows from \eqref{ineq-new5.18} with strict inequality.
\end{proof}

Let us restate Propositions \ref{prop-key1} and \ref{cor-5.9} as  Propositions \ref{prop-key1-copy} and \ref{cor-5.9-copy}.

\begin{prop}\label{prop-key1-copy}
Let $\ha \subset\binom{[n]}{a}$ and $\hb \subset \binom{[n]}{b}$ be cross-intersecting families, $n\geq a+b$, $b>a\geq 1$. Suppose that $\ha$ is non-trivial and $\hb$ is non-empty. Then
\begin{align}\label{hahbnontrivial-copy}
  |\ha|+|\hb| \leq \binom{n}{b}-2\binom{n-a}{b}+\binom{n-2a}{b}+2.
\end{align}
Moreover, for $n>a+b$ and $a\geq 2$, $\ha_0$, $\hb_0$ are the only families
achieving equality.
\end{prop}

\begin{proof}
If $\hb$ is non-trivial then \eqref{hahbnontrivial-copy} follows from \eqref{ineq-nontrivial}. Consequently we may assume by symmetry that $1\in B$ for all $B\in \hb$. Consider the cross-intersecting families $\ha(\bar{1})$ and $\hb(1)$. They are both non-empty. Indeed, $\ha(\bar{1})\neq\emptyset$ because of $\cap \ha=\emptyset$ and $\hb(1)\neq \emptyset$ because of $|\hb(1)|=|\hb|$. Applying \eqref{ft92} and the obvious inequality $|\ha(1)|\leq \binom{n-1}{a-1}$ we infer
\[
|\ha|+|\hb| =|\ha(1)|+|\ha(\bar{1})| +|\hb(1)|\leq \binom{n-1}{a-1}+1+\binom{n-1}{b-1}-\binom{n-a-1}{b-1},
\]
By \eqref{ineq-key10} we conclude that \eqref{hahbnontrivial-copy} holds with the equality holding iff $n=a+b$ or $a=1$.
\end{proof}

\begin{prop}\label{cor-5.9-copy}
Let $\ha \subset\binom{[n]}{a}$ and $\hb \subset \binom{[n]}{b}$ be cross-intersecting families, $n\geq a+b$, $b\geq a+2\geq 3$. If $\ha$ is non-trivial, then
\begin{align}\label{hahbnontrivial2-copy}
  |\ha|+|\hb| \leq \binom{n}{b}-2\binom{n-a}{b}+\binom{n-2a}{b}+2.
\end{align}
Moreover, for $n>a+b$ and $a\geq 2$, $\ha_0$, $\hb_0$ are the only families
achieving equality.
\end{prop}

\begin{proof}
If $\hb$ is non-empty, then \eqref{hahbnontrivial2-copy} follows from \eqref{hahbnontrivial-copy}. If $\hb$ is empty, then
\[
|\ha|+|\hb|=|\ha| \leq \binom{n}{a}.
\]
By  \eqref{ineq-key11} we obtain \eqref{hahbnontrivial2-copy} with the equality holding iff $n=a+b$ or $a=1, b=3$.
\end{proof}

\begin{cor}
For $k\geq 5$ and $n\geq 2k+1$,
\begin{align}\label{ineq-key8}
\binom{n-4}{k-1}-2\binom{n-k-2}{k-1}+\binom{n-2k}{k-1}>\binom{n-5}{k-2}-\binom{n-k-3}{k-2}+\binom{n-5}{k-4}
\end{align}
\end{cor}

\begin{proof}
Applying \eqref{ineq-key10} for  $(n-4,k-1,k-2)$, we obtain that
\[
\binom{n-4}{k-1}-2\binom{n-k-2}{k-1}+\binom{n-2k}{k-1}\geq \binom{n-5}{k-3}+\binom{n-5}{k-2}-\binom{n-k-3}{k-2}-1.
\]
Since $k-4<k-3< n-5$ and $(n-5)> 2(k-3)$, we infer
\[
\binom{n-5}{k-3}-\binom{n-5}{k-4} = \frac{n-2k+2}{k-3}\binom{n-5}{k-4}\geq \frac{3(n-5)}{k-3}>1.
\]
That is, $\binom{n-5}{k-3}> \binom{n-5}{k-4}+1$ and \eqref{ineq-key8} follows.
\end{proof}



\begin{lem}
For $n> 2k$, $k\geq 5$ and $2\leq q\leq k-1$,
\begin{align}\label{ineq-key9}
\binom{n-q-1}{k-1} - 2\binom{n-k-1}{k-1}+\binom{n-2k+q-1}{k-1}+2> \binom{n-q-1}{k-q}-\binom{n-k-q-1}{k-q}.
\end{align}
\end{lem}

\begin{proof}
For $3\leq q\leq k-1$, by applying \eqref{ineq-key11} to the triple $(n-q-1,k-1,k-q)$ and noting $n-q-1> (k-1)+(k-q)$, we obtain
\[
\binom{n-q-1}{k-1} - 2\binom{n-k-1}{k-1}+\binom{n-2k+q-1}{k-1}+2> \binom{n-q-1}{k-q}
\]
and \eqref{ineq-key9} follows.

Now we assume that $q=2$ and \eqref{ineq-key9} is equivalent to
\begin{align}\label{ineq-neq5.22}
\binom{n-3}{k-1} - 2\binom{n-k-1}{k-1}+\binom{n-2k+1}{k-1}+2\geq \binom{n-3}{k-2}-\binom{n-k-3}{k-2}.
\end{align}
 Let
\[
h(n,k) = \binom{n-3}{k-1} - 2\binom{n-k-1}{k-1}+\binom{n-2k+1}{k-1}.
\]
We distinguish two cases.

{\noindent\bf Case 1. } $n\geq 3k$.

Note that for $k\geq 5$
\begin{align*}
h(n,k) &= \sum_{1\leq i\leq k-2} \left(\binom{n-3-i}{k-2} -\binom{n-k-i-1}{k-2} \right)\\[3pt]
&\geq  \binom{n-4}{k-2} -\binom{n-k-2}{k-2} +\binom{n-5}{k-2} -\binom{n-k-3}{k-2}+\binom{n-6}{k-2} -\binom{n-k-4}{k-2}.
\end{align*}
Since $n\geq (\ell+2)k/2$ implies $\frac{n-k-\ell}{n-2-\ell} \geq \frac{n-k}{n}$,  by \eqref{ineq-key2} we infer
\[
\frac{\binom{n-3-\ell}{k-2} -\binom{n-k-1-\ell}{k-2}}{\binom{n-3}{k-2}-\binom{n-k-1}{k-2}}\geq
\left(\frac{n-k-\ell}{n-2-\ell}\right)^\ell\geq \left(\frac{n-k}{n}\right)^\ell, \ \ell=1,2.
\]
Thus, for $n\geq 3k$ and $k\geq 5$ we have
\begin{align*}
h(n,k,q)&\geq \left(\frac{2}{3}+\left(\frac{2}{3}\right)^2\right)\left(\binom{n-3}{k-2}-\binom{n-k-1}{k-2}\right)+\binom{n-6}{k-2} -\binom{n-k-4}{k-2}\\[3pt]
&> \binom{n-3}{k-2}-\binom{n-k-1}{k-2}+\binom{n-6}{k-2} -\binom{n-k-4}{k-2}.
\end{align*}
Using $\binom{n-6}{k-2}\geq \binom{n-k-1}{k-2}$ and $\binom{n-k-4}{k-2}\leq \binom{n-k-3}{k-2}$, we conclude that
\begin{align*}
h(n,k)&> \binom{n-3}{k-2}-\binom{n-k-1}{k-2}+\binom{n-k-1}{k-2} - \binom{n-k-3}{k-2}=\binom{n-3}{k-2} - \binom{n-k-3}{k-2}
\end{align*}
and \eqref{ineq-neq5.22} holds.

{\noindent\bf Case 2. } $2k< n\leq 3k-1$.

To prove \eqref{ineq-neq5.22}, it suffices to show that
\[
\binom{n-3}{k-1} - 2\binom{n-k-1}{k-1}\geq \binom{n-3}{k-2}-\binom{n-k-3}{k-2}.
\]
Since $\binom{n-3}{k-2}=\frac{k-1}{n-k-1}\binom{n-3}{k-1}$ and $\binom{n-k-3}{k-2}=\frac{(k-1)(n-2k)}{(n-k-1)(n-k-2)}\binom{n-k-1}{k-1}$, the equality is equivalent to \[
\frac{n-2k}{n-k-1}\binom{n-3}{k-1} \geq \left(2-\frac{(k-1)(n-2k)}{(n-k-1)(n-k-2)}\right)\binom{n-k-1}{k-1}.
\]
Let $t=n-2k$. Then $1\leq t\leq k-1$. Substituting $t=n-2k$ and expanding the binomial coefficients, we get the equivalent version
\[
\frac{t}{t+k-1}\prod_{1\leq i\leq k-1}\frac{t+i+k-2}{t+i} \geq 2-\frac{(k-1)t}{(t+k-1)(t+k-2)}.
\]
By moving the first term $\frac{t+k-1}{t+1}$ in front of the $\prod$,  it changes to:
\begin{align}\label{ineq-new5.23}
\frac{t}{t+1}\prod_{2\leq i\leq k-1}\frac{t+i+k-2}{t+i}\geq 2-\frac{(k-1)t}{(t+k-1)(t+k-2)}.
\end{align}
Note that  $2\leq i\leq k-3$ and $t\leq k-1$ imply $\frac{t+i+k-2}{t+i}\geq \frac{3k-6}{2k-4}=\frac{3}{2}$. Moreover, $\frac{t}{t+1}\geq \frac{1}{2}$ and $\frac{t+2k-4}{t+k-2}\frac{t+2k-3}{t+k-1} \geq \frac{3k-5}{2k-3}\frac{3k-4}{2k-2}$. Consequently for $k\geq 5$
\[
\frac{t}{t+1}\frac{t+2k-4}{t+k-2}\frac{t+2k-3}{t+k-1} \geq \frac{1}{2}\cdot \frac{3k-5}{2k-3}\cdot\frac{3k-4}{2k-2} \geq \frac{ 10\times 11}{2\times7\times 8}=\frac{55}{56}.
\]
It follows that the LHS of \eqref{ineq-new5.23} is greater than $\frac{55}{56} \left(\frac{3}{2}\right)^{k-4}$. On the other hand, the RHS of \eqref{ineq-new5.23} is less than $2$. For $k\geq 6$ we have
\[
\frac{55}{56} \left(\frac{3}{2}\right)^{k-4} \geq \frac{55}{56} \left(\frac{3}{2}\right)^2 = \frac{495}{224}>2.
\]
Thus  \eqref{ineq-new5.23} holds for $k\geq 6$.
For $k=5$ and  $1\leq t\leq 4$, one can check directly that \eqref{ineq-new5.23} holds. Thus the lemma is proven.
\end{proof}

\section{The proof of Theorem \ref{thm-main3}}

In this section we use the method of shifting ad extremis, introduced in Section 2 to obtain the exact value of $f(n,k,3)$ for $n> 2k$ and $k\geq 7$. Before that let us prove the following two inequalities.

\begin{lem}
For $n> 2k\geq 6$,
\begin{align}\label{ineq-5.4}
g(n,k,3)< |\hg(n,k)|.
\end{align}
\end{lem}

\begin{proof}
Note that
\[
g(n,k,3) = \binom{n-1}{k-1}-\binom{n-k-2}{k-1}-(k+1)\binom{n-k-2}{k-2}+k+1.
\]
By \eqref{ineq-5.2} we have
\begin{align*}
 |\hg(n,k)|-g(n,k,3)
&= k\binom{n-k-2}{k-2}-\left(\binom{n-k}{k-1}-\binom{n-2k}{k-1}\right)+\binom{n-k-2}{k-3}-k+2\\[3pt]
&=(k-2)\left(\binom{n-k-2}{k-2}-1\right)-\binom{n-k-3}{k-2}-\cdots-\binom{n-2k}{k-2}\\[3pt]
&=\sum_{1\leq i\leq k-2}\left(\binom{n-k-2}{k-2}-1-\binom{n-k-2-i}{k-2}\right)> 0.
\end{align*}
\end{proof}

\begin{lem}
For $n> 2k\geq 6$,
\begin{align}\label{ineq-key6}
|\hg(n,k)|&> 3\left(\binom{n-4}{k-2}-\binom{n-k-2}{k-2}+1\right)+4\binom{n-4}{k-3}+\binom{n-4}{k-4}.
\end{align}
\end{lem}

\begin{proof}
Let $R=(5,6,\ldots,k+2)$. Consider the following construction.
\begin{align*}
\hf_R=&\left\{P\cup R\colon P\in \binom{\{2,3,4\}}{2}\right\}\bigcup \left\{F\in \binom{[n]}{k}\colon |F\cap [4]|\geq 3\right\}\\[3pt]
& \quad \bigcup \left\{F\in \binom{[2,n]}{k}\colon F\cap [4]=(1,2) \mbox{ or }(1,3) \mbox{ or }(1,4), \ F\cap R\neq \emptyset\right\}.
\end{align*}
It is easy to check that $\hf_R$ is intersecting, initial and $\tau(\hf)=3$. Moreover,
\[
|\hf_R|=3\left(\binom{n-4}{k-2}-\binom{n-k-2}{k-2}+1\right)+4\binom{n-4}{k-3}+\binom{n-4}{k-4}.
\]
Then by \eqref{ineq-5.4} we conclude that
\[
|\hf_R|\leq g(n,k,3)< |\hg(n,k)|.
\]
\end{proof}

Now we are in a position to prove our main theorem.

\begin{proof}[Proof of Theorem \ref{thm-main3}]
Assume that $\hf\subset \binom{[n]}{k}$ is an intersecting family, $n> 2k$, $\tau(\hf)\geq 3$ and $|\hf|$ is maximal. Without loss of generality, assume further that $\hf$ is shifted ad extremis with respect to $\tau(\hf)\geq 3$ and let $\mathds{H}$ be the corresponding shift-resistant graph. If $\hf$ is initial, then by \eqref{ineq-5.4} $|\hf|\leq  g(n,k,3)< |\hg(n,k)|$ and we are done. Thus we may assume $\mathds{H}\neq \emptyset$.

By Theorem \ref{thm-case1}, we assume that $\hht^{(3)}(\hf)$ is a star. Without loss of generality, suppose that all  $T\in \hht^{(3)}(\hf)$  contain $a$. Let $\ha=\hf(a)$, $\hb=\hf(\bar{a})$ and set $V=[n]\setminus \{a\}$. By Lemma \ref{lem2.1}, $\ha,\hb$ are shifted ad extremis with respect to $\tau(\hb)\geq 2$ and $\mathds{H}\cap \binom{V}{2}$ is the corresponding shift-resistant graph.  Define the 2-cover graph $\hhat$ as
\[
\hhat=\left\{(i,j)\in \binom{V}{2}\colon \hb(\bar{i},\bar{j}) =\emptyset\right\}.
\]
By Claim \ref{claim-6.4}, $\mathds{H}\cap \binom{V}{2}$ is non-empty. Since $\mathds{H}\cap \binom{V}{2}$ is a subgraph of $\hhat$, $\hhat$ is non-empty as well.

Now by Proposition \ref{prop-5.4}, either $\hhat$ contains a triangle or $\hhat$ is a complete bipartite graph on partite sets $X$ and $Y$ where $X\cup Y$ is the set of the first $|X|+|Y|$ elements in $V$, $2\leq |X|\leq k$, $2\leq |Y|\leq k$.

We deal with the two cases separately.

\begin{prop}\label{lem7.5}
 If  $\hhat$ contains a triangle, $n\geq 2k+1$ and $k\geq 5$, then
\[
|\hf|=|\ha|+|\hb| \leq |\hg(n,k)|.
\]
Moreover, the equality holds iff $\hf=\hg(n,k)$ up to isomorphism.
\end{prop}
\begin{proof}
Let $(u,v,w)\subset V$ be a triangle in $\hhat$ with $u+v+w$ minimal.
By the definition of $\hhat$, we know that
\[
\hb(\bar{u},\bar{v}) = \hb(\bar{u},\bar{w}) =\hb(\bar{v},\bar{w}) =\emptyset.
\]
Since $\ha,\hb$ are saturated cross-intersecting with respect to $\ha$,  $(u,v)$, $(v,w)$, $(u,w)$ are full in $\ha$.

\begin{claim}\label{lem-6.13}
For every $x\in \{u,v,w\}$ there is $A_x\in \ha$ satisfying
\begin{align}\label{claim3}
A_x \cap\{u,v,w\}=\{x\}.
\end{align}
\end{claim}
\begin{proof}
Suppose that \eqref{claim3} fails for $x$.  Since  $\{u,v,w\}\setminus \{x\}$ is a cover of $\hb$, $\tau(\hf)>2$  implies that $A\cap \{u,v,w\}=\emptyset$ for some $A\in \ha$.

We claim that $\{z,x\}$ is shiftable for every $z\in A$. If $\{z,x\}$ is a shift-resistant pair, then $\hb(\bar{z},\bar{x})=\emptyset$.  Since $(\ha,\hb)$ forms a saturated pair, it follows that $\ha(z,x)$ is full. Then there are many $A_x\in \ha$ satisfying \eqref{claim3}.  Thus $\{z,x\}$ must be shiftable.

Should $z<x$ hold, $S_{zx}(\ha)=\ha$ and the fullness of each edge of $\{u,v,w\}$ imply that $(\{u,v,w\}\setminus\{x\})\cup \{z\}$ is also a triangle. This contradicts the minimality of $u+v+w$. Thus $z>x$ for every $z\in A$.

Now fix $z\in A$. As $\{z,x\}$ is shiftable,  $S_{xz}(A)=(A\setminus\{z\})\cup \{x\}$ is in $\ha$ and satisfies \eqref{claim3}, concluding the proof.
\end{proof}

Recall that $(u,v)$, $(v,w)$, $(u,w)$ are full in $\ha$. By the cross-intersecting property $|B\cap \{u,v,w\}|\geq 2$ for all  $B\in \hb$. For
notational convenience, set
\[
\hb_{\bar{x}}=\hb(\{u,v,w\}\setminus \{x\},\{u,v,w\})\subset \binom{V\setminus \{u,v,w\}}{k-2}
\]
and
\[
\ha_x = \ha(\{x\},\{u,v,w\}) \subset \binom{V\setminus \{u,v,w\}}{k-2},\ x\in \{u,v,w\}.
\]
Note that $\ha_x$, $\hb_{\bar{x}}$ are cross-intersecting. Since $\hb$ is non-trivial, $\hb_{\bar{x}}$ is non-empty. By \eqref{claim3} $\ha_x$ is also non-empty. Thus by \eqref{ft92} we have
\begin{align}\label{ineq-hauhbubar}
|\ha_x|+|\hb_{\bar{x}}| \leq \binom{n-4}{k-2}-\binom{n-k-2}{k-2}+1.
\end{align}

Set
\[
\ha_0 = \{A\in \ha\colon |A\cap \{u,v,w\}|\geq 2\}.
\]
By fullness, we have
\begin{align}\label{ineq-ha0tcase}
|\ha_0| = 3\binom{n-4}{k-3}+\binom{n-4}{k-4}.
\end{align}

If $\ha(\overline{\{u,v,w\}})=\emptyset$,  then
\begin{align}\label{ineq-6.2}
|\ha(\overline{\{u,v,w\}})|+|\hb(\{u,v,w\})| =|\hb(\{u,v,w\})|\leq \binom{n-4}{k-3}.
\end{align}
Adding \eqref{ineq-ha0tcase}, \eqref{ineq-6.2} and \eqref{ineq-hauhbubar} for $x=u,v,w$, we obtain that
\begin{align*}
|\ha|+|\hb|&=|\ha_0|+ |\ha(\overline{\{u,v,w\}})|+|\hb(\{u,v,w\})| +\sum_{x\in\{u,v,w\}}\left(|\ha_x|+|\hb_{\bar{x}}|\right) \\[3pt]
&\leq 4\binom{n-4}{k-3}+\binom{n-4}{k-4}+ 3\left(\binom{n-4}{k-2}-\binom{n-k-2}{k-2}+1\right)\\[5pt]
&\overset{\eqref{ineq-key6}}{<}|\hg(n,k)|
\end{align*}
and we are done. Thus in the rest of the proof we may assume that $\ha(\overline{\{u,v,w\}})\neq\emptyset$.

\begin{claim}\label{claim5-5}
If $\ha(\overline{\{u,v,w\}})\neq\emptyset$, then
\begin{align}\label{ineq-ha123bar}
|\ha(\overline{\{u,v,w\}})|+|\hb(\{u,v,w\})|\leq \binom{n-4}{k-1}-2\binom{n-k-2}{k-1}+\binom{n-2k}{k-1}.
\end{align}
Moreover, the equality holds iff $\hb(\{u,v,w\})=\emptyset$ and there exist disjoint sets $S,T\in \binom{V\setminus \{u,v,w\}}{k-2}$ such that $\hb_{\bar{u}}\cup \hb_{\bar{v}}\cup \hb_{\bar{w}}=\{S,T\}$ and
$\ha(\overline{\{u,v,w\}})=\{E\in \binom{V\setminus \{u,v,w\}}{k-1}\colon E\cap S\neq \emptyset,  E\cap T\neq \emptyset\}$.
\end{claim}

\begin{proof}
We distinguish two cases.

\vspace{3pt}
{\noindent \bf Case 1.} $\ha(\overline{\{u,v,w\}})$ is non-trivial.
\vspace{3pt}

If $\hb(\{u,v,w\})=\emptyset$ then let $\hb^* =\hb_{\bar{u}}\cup \hb_{\bar{v}}\cup \hb_{\bar{w}}$. The non-triviality of $\hb$ implies the non-triviality of $\hb^*$ and thereby $|\hb^*|\geq 2$. By \eqref{ineq-nontrivial}, we obtain that
\[
|\ha(\overline{\{u,v,w\}})| \leq |\ha(\overline{\{u,v,w\}})|+|\hb^*|-2 \leq \binom{n-4}{k-1}  -2\binom{n-k-2}{k-1}+\binom{n-2k}{k-1}.
\]
Moreover, the equality holds iff there exist disjoint sets $S,T\in \binom{V\setminus \{u,v,w\}}{k-2}$ such that $\hb^*=\{S,T\}$ and
$\ha(\overline{\{u,v,w\}})=\{E\in \binom{V\setminus \{u,v,w\}}{k-1}\colon E\cap S\neq \emptyset,  E\cap T\neq \emptyset\}$.

If $\hb(\{u,v,w\})\neq \emptyset$ then let
\[
\hb^+ =\left\{D\in \binom{V\setminus \{u,v,w\}}{k-2}\colon \exists B\in \hb(\{u,v,w\}),\ B\subset D\right\} \mbox{ and }\hat{\hb}=\hb^*\cup \hb^+.
\]
Clearly $|\hb^+|\geq n-4-(k-3)$. We claim that $|\hat{\hb}|\geq  |\hb(\{u,v,w\})|+2$. For $n\geq 2k+1$, we have $n-4-(k-3)\geq k$. Consequently for $|\hb(\{u,v,w\})|\leq k-2$, we have
\[
|\hat{\hb}|\geq |\hb^+|\geq k\geq  |\hb(\{u,v,w\})|+2.
\]
From now on assume $|\hb(\{u,v,w\})|\geq k-1$. By Sperner's argument \cite{Sperner},
\[
 |\hb^+|\geq |\hb(\{u,v,w\})|\frac{n-4-(k-3)}{k-2}.
\]
Set $t=|\hb(\{u,v,w\})|$. Since $n-4-(k-3)\geq k$, it is sufficient to show
\[
\frac{tk}{k-2}\geq t+2 \mbox{ or equivalently } 2t\geq 2(k-2),
\]
which follows from $t\geq k-1>k-2$. Thus $|\hat{\hb}|\geq  |\hb(\{u,v,w\})|+2\geq 3$.

Note that $\ha(\overline{\{u,v,w\}})$ and $\hat{\hb}$ are cross-intersecting and both non-trivial. By \eqref{ineq-nontrivial} and $|\hat{\hb}|\geq 3$, we obtain
\[
|\ha(\overline{\{u,v,w\}})|+|\hb(\{u,v,w\})|\leq |\ha(\overline{\{u,v,w\}})|+|\hat{\hb}|-2 < \binom{n-4}{k-1}  -2\binom{n-k-2}{k-1}+\binom{n-2k}{k-1}.
\]

\vspace{3pt}
{\noindent \bf Case 2.} $\ha(\overline{\{u,v,w\}})$  is a star.
\vspace{3pt}

Let $z\in \cap \ha(\overline{\{u,v,w\}})$. Clearly, $|\hb(\{u,v,w,z\})|\leq \binom{n-5}{k-4}$. If $\hb(\{u,v,w\},\bar{z})\neq \emptyset$, then by \eqref{ft92} we have
\[
|\ha(\overline{\{u,v,w\}},z)|+|\hb(\{u,v,w\},\bar{z})|\leq \binom{n-5}{k-2}-\binom{n-5-(k-3)}{k-2}+1.
\]
It follows that
\begin{align*}
|\ha(\overline{\{u,v,w\}})|+|\hb(\{u,v,w\})|&=|\ha(\overline{\{u,v,w\}},z)|+
|\hb(\{u,v,w\},\bar{z})|+|\hb(\{u,v,w,z\})|\\[3pt]
&\leq \binom{n-5}{k-2}-\binom{n-5-(k-3)}{k-2}+1+\binom{n-5}{k-4}\\[3pt]
&\leq \binom{n-5}{k-2}-\binom{n-k-3}{k-2}+\binom{n-5}{k-4}\\[3pt]
&\overset{\eqref{ineq-key8}}< \binom{n-4}{k-1}-2\binom{n-k-2}{k-1}+\binom{n-2k}{k-1}.
\end{align*}

If $\hb(\{u,v,w\},\bar{z})= \emptyset$, then $|\hb(\{u,v,w\})|=|\hb(\{u,v,w,z\})|\leq \binom{n-5}{k-4}$. Since $\hb$ is non-trivial, we may choose $B_z\in \hb$ such that $z\notin B_z$. By the cross-intersecting property, $A\cap B_z\neq \emptyset$ for all $A\in \ha(\overline{\{u,v,w\}})$. Note that $|B_z\cap \{u,v,w\}|\geq 2$. Thus,
\begin{align*}
|\ha(\overline{\{u,v,w\}})|+|\hb(\{u,v,w\})|&=|\ha(\overline{\{u,v,w\}},z)|+|\hb(\{u,v,w,z\})|\\[3pt]
&\leq \binom{n-5}{k-2}-\binom{n-5-|B_z\setminus \{u,v,w\}|}{k-2}+\binom{n-5}{k-4}\\[3pt]
&\leq \binom{n-5}{k-2}-\binom{n-k-3}{k-2}+\binom{n-5}{k-4}\\[3pt]
&\overset{\eqref{ineq-key8}}<\binom{n-4}{k-1}-2\binom{n-k-2}{k-1}+\binom{n-2k}{k-1}.
\end{align*}
\end{proof}

Adding \eqref{ineq-ha0tcase}, \eqref{ineq-hauhbubar} for $x=u,v,w$ and \eqref{ineq-ha123bar}, we arrive at
\begin{align*}
|\ha|+|\hb|&=|\ha_0|+\sum_{x\in\{u,v,w\}}\left(|\ha_x|+|\hb_{\bar{x}}|\right) + |\ha(\overline{\{u,v,w\}})|+|\hb(\{u,v,w\})| \\[3pt]
&\leq 3\binom{n-4}{k-3}+\binom{n-4}{k-4}+ 3\binom{n-4}{k-2}-3\binom{n-k-2}{k-2}+3\\[3pt]
&\qquad +\binom{n-4}{k-1}-2\binom{n-k-2}{k-1}+\binom{n-2k}{k-1}\\[3pt]
&=\binom{n-1}{k-1}-2\binom{n-k-1}{k-1}-\binom{n-k-2}{k-2}+\binom{n-2k}{k-1}+3\\[3pt]
&=\binom{n-1}{k-1}-\binom{n-k-1}{k-1}-\binom{n-k}{k-1}+\binom{n-2k}{k-1}+\binom{n-k-2}{k-3}+3\\[3pt]
&=|\hg(n,k)|.
\end{align*}
The equality holds iff equalities hold in \eqref{ineq-ha123bar} and \eqref{ineq-hauhbubar} for $x=u,v,w$. By Claim \ref{claim5-5}, $\hb(\{u,v,w\})=\emptyset$ and there exist disjoint sets $S,T\in \binom{V\setminus \{u,v,w\}}{k-2}$ such that $\hb_{\bar{u}}\cup \hb_{\bar{v}}\cup \hb_{\bar{w}}=\{S,T\}$ and
$\ha(\overline{\{u,v,w\}})=\{E\in \binom{V\setminus \{u,v,w\}}{k-1}\colon E\cap S\neq \emptyset,  E\cap T\neq \emptyset\}$. By Theorem \ref{thm-ft92} and $|\hb_{\bar{u}}\cup \hb_{\bar{v}}\cup \hb_{\bar{w}}|=2$, we infer that $|\hb_{\bar{u}}|=|\hb_{\bar{v}}|=|\hb_{\bar{w}}|=1$. Without loss of generality, assume that
\[
\hb=\left\{S\cup \{u,v\}, S\cup\{u,w\}, T\cup \{v,w\}\right\}.
\]
Then
\[
\ha=\left\{A\in \binom{V}{k-1}\colon A\cap B\neq \emptyset \mbox{ for all }B\in \hb\right\}.
\]
Thus the proposition is proven.
\end{proof}

\begin{prop}\label{lem7.6}
Suppose that $n> 2k$ and $k\geq 5$. If $\hhat$ is a complete bipartite graph on partite sets $X$ and $Y$ where $X\cup Y$ consists of the  first $|X|+|Y|$ elements in $V$, $2\leq |X|\leq k$, $2\leq |Y|\leq k$, then
\[
|\hf|=|\ha|+|\hb| < |\hg(n,k)|.
\]
\end{prop}
\begin{proof}
Assume that  $X=\{x_1,\ldots,x_p\}$, $Y=\{y_1,\ldots,y_q\}$ and $\{x_1,\ldots,x_p,y_1,\ldots,y_q\}$ is the set of first $p+q$ elements in $V$.
Define
\begin{align*}
&\ha_0=\left\{A\in \binom{V}{k-1}\colon A\cap X\neq \emptyset,A\cap Y\neq \emptyset\right\}.
\end{align*}
By Claim \ref{claim-6.7}, we know that for any $B\in \hb$ either $X\subset B$ or $Y\subset B$.
 By saturatedness, we have
\begin{align}\label{ineq-ha0}
|\ha_0| = \binom{n-1}{k-1}-\binom{n-p-1}{k-1}-\binom{n-q-1}{k-1}+\binom{n-p-q-1}{k-1}.
\end{align}

\begin{claim}\label{claim1}
$p+q\geq k+2$.
\end{claim}
\begin{proof}
Suppose for contradiction  that $p+q\leq k+1$. Let $K$ be the set of first $k+1$ elements in $V$. We first show that $\binom{K}{k}\subset \hb$. Since $\hb$ is shifted on $V\setminus X$ and $V\setminus Y$, it is sufficient to show that $K\setminus \{x_1\}$ and $K\setminus \{y_1\}$ are in $\hb$, where $x_1$ ($y_1$) is the smallest element of $X$ ($Y$), respectively. By symmetry consider $x_1$. By non-triviality of $\hb$, there exists $B\in \hb$ with $x_1\notin B$. This implies $B\cap Y=Y$. Now $B\setminus Y\in \binom{V\setminus Y}{k-q}$. Set $B_1=K\setminus \{x_1\}$. By $Y\subset X\cup Y\subset K$, $Y\subset B_1$, we infer that $B_1\setminus Y\in \binom{V\setminus Y}{k-q}$ and $B_1\setminus Y\prec B\setminus Y$. By shiftedness on $V\setminus Y$, $B_1\in \hb$. Combining with $K\setminus \{y_1\}\in \hb$, $\binom{K}{k}\subset \hb$ follows.

 Recall that $\mathds{H}\cap\binom{V}{2}\neq \emptyset$. Let $(i,j)\in \mathds{H}\cap\binom{V}{2}$. By  Claim \ref{claim-6.4}, we know that $S_{ij}(\hb)$ is a star. Since $\mathds{H}\cap \binom{V}{2}$ is a subgraph of $\hhat$, $(i,j)\in \hhat$. It follows  that $(i,j)$ is in $X\times Y$, i.e., either $i\in X$, $j\in Y$ or $i\in Y$, $j\in X$. In any case, $i<j$ and $i,j\in X\cup Y\subset K$. Consequently $K\setminus \{j\}$ and $K\setminus \{i\}$ are in $\hb$. Thus $K\setminus\{i,j\}\in \hb(i)\cap \hb(j)$, contradicting the fact that $S_{ij}(\hb)$ is a star.
\end{proof}

\begin{claim}\label{claim2}
If $p\leq k-1$ then $\hb(X)$ is non-trivial. If $q\leq k-1$ then $\hb(Y)$ is non-trivial.
\end{claim}

\begin{proof}
Suppose the contrary and let $z\in \tilde{B}$ for all $\tilde{B}\in \hb(X)$. Note that $z\notin Y$. Indeed the opposite would mean $z\in \hb$ for all $B\in \hb$, i.e., $\cap \hb\neq \emptyset$, contradiction.

Thus $z\notin Y$. Consequently $\{z,y\}$ is a transversal of $\hb$ for all $y\in Y$.  That is, $(X\cup \{z\})\times Y$ should be the 2-cover graph $\hhat$, contradiction.
\end{proof}

Now we distinguish two cases.

\vspace{3pt}
{\noindent\bf Case 1.} $p,q\leq k-1$.
\vspace{3pt}

Consider $\ha(\overline{X}) \subset \binom{V\setminus X}{k-1}$ and $\hb(X) \subset \binom{V\setminus X}{k-p}$.
Using $p,q\leq k-1$,  Claim \ref{claim1} implies $p,q\geq 3$. By Claim \ref{claim2}, both $\hb(X)$ and $\hb(Y)$ are non-trivial. Note that $\hb(X)$ is $(k-p)$-uniform,  $\ha(\overline{X})$ is $(k-1)$-uniform. Since $(k-1)\geq (k-p)+2$ and $n-p-1\geq (k-1)+(k-p)$, by Proposition \ref{cor-5.9} we infer
\begin{align}
|\ha(\overline{X})|+|\hb(X)| &\leq \binom{n-p-1}{k-1} - 2\binom{n-k-1}{k-1}+\binom{n-2k+p-1}{k-1}+2,\label{ineq-hahbx}
\end{align}
Similarly, we have
\begin{align}
|\ha(\overline{Y})|+|\hb(Y)| &\leq  \binom{n-q-1}{k-1} - 2\binom{n-k-1}{k-1}+\binom{n-2k+q-1}{k-1}+2.\label{ineq-hahby}
\end{align}

By adding \eqref{ineq-ha0}, \eqref{ineq-hahbx} and \eqref{ineq-hahby}, we arrive at
\begin{align*}
|\ha|+|\hb|&\leq |\ha_0|+|\ha(\overline{X})|+|\hb(X)|+|\ha(\overline{Y})|+|\hb(Y)|\\[3pt]
&\leq \binom{n-1}{k-1}+\binom{n-p-q-1}{k-1}-4\binom{n-k-1}{k-1}+\binom{n-2k+p-1}{k-1}+\binom{n-2k+q-1}{k-1}+4.
\end{align*}
Let $p+q=x$ and define
\begin{align*}
f(x,p) &=\binom{n-1}{k-1}+\binom{n-x-1}{k-1}-4\binom{n-k-1}{k-1}+\binom{n-2k+p-1}{k-1}+\binom{n-2k+x-p-1}{k-1}+4.
\end{align*}
Since
\[
\binom{x-1}{\ell-1} = \frac{(x-1)(x-2)\cdots(x-\ell+1)}{(\ell-1)!} < \frac{x(x-1)\cdots(x-\ell+2)}{(\ell-1)!}=\binom{x}{\ell-1},
\]
we see  $\binom{x-1}{\ell-1}< \binom{x}{\ell-1}$ for $x>\ell-2$.
It follows that $2\binom{x}{\ell} <  \binom{x-1}{\ell}+\binom{x+1}{\ell}$.
Hence $2f(x,p)< f(x,p+1)+f(x,p-1)$ for fixed $x$ and $x-k+1< p< k-1$.
 By symmetry we have $f(x,k-1)=f(x,x-k+1)$. Therefore,
\[
f(x,p)\leq \max\{f(x,k-1),f(x,x-k+1)\} =f(x,k-1).
\]
Note also that $2f(x,p)< f(x-1,p)+f(x+1,p)$ for fixed $p$ and $k+1<k+2\leq x=p+q\leq 2k-2$, we deduce that
\[
f(x,p) \leq \max\left\{f(k+1,k-1), f(2k-2,k-1)\right\}.
\]
Note that
\begin{align*}
f(k+1,k-1) &= \binom{n-1}{k-1} +\binom{n-k-2}{k-1}-4\binom{n-k-1}{k-1}+\binom{n-k-2}{k-1} +\binom{n-2k+1}{k-1}+4 \\[3pt]
& =\binom{n-1}{k-1} +\binom{n-k-2}{k-1}-\binom{n-k-1}{k-1}-\binom{n-k}{k-1}+\binom{n-k-1}{k-2}\\[3pt]
&\qquad-2\binom{n-k-1}{k-1}+\binom{n-2k}{k-1}+\binom{n-2k}{k-2}+\binom{n-k-2}{k-1}+4 \\[3pt]
&=|\hg(n,k)|+\binom{n-2k}{k-2}+1-\binom{n-k-2}{k-2}<|\hg(n,k)|
\end{align*}
and
\begin{align*}
f(2k-2,k-1) &= \binom{n-1}{k-1} +\binom{n-2k+1}{k-1}-4\binom{n-k-1}{k-1}+2\binom{n-k-2}{k-1}+4 \\[3pt]
& =\binom{n-1}{k-1} +\binom{n-2k}{k-1}+\binom{n-2k}{k-2}-\binom{n-k-1}{k-1}-\binom{n-k}{k-1}\\[3pt]
&\qquad+\binom{n-k-1}{k-2}-2\binom{n-k-1}{k-1}
+2\binom{n-k-2}{k-1}+4 \\[3pt]
&=|\hg(n,k)|+\binom{n-2k}{k-2}+1-\binom{n-k-2}{k-2}<|\hg(n,k)|.
\end{align*}
Thus we conclude that $|\ha|+|\hb|< |\hg(n,k)|$.

\vspace{3pt}
{\noindent\bf Case 2.} $p=k$.
\vspace{3pt}

Then $X\in \hb$ and the intersection property of $\hb$ implies that $q\leq k-1$. Since $\ha,\hb$ are cross-intersecting,  we infer $\ha(\overline{X})=\emptyset$. It follows that
\begin{align}\label{ineq-hahbx2}
|\ha(\overline{X})|+|\hb(X)| =1.
\end{align}

\begin{claim}
\begin{align}
|\ha(\overline{Y})|+|\hb(Y)| &<  \binom{n-q-1}{k-1} - 2\binom{n-k-1}{k-1}+\binom{n-2k+q-1}{k-1}+2.\label{ineq-hahby2}
\end{align}
\end{claim}
\begin{proof}
Since $2\leq q\leq k-1$, by Claim \ref{claim2} we see that $\hb(Y)$ is non-trivial. Note that $\hb(Y)$ is $(k-q)$-uniform and $\ha(\overline{Y})$ is $(k-1)$-uniform. If $\ha(\overline{Y})\neq \emptyset$, then by \eqref{hahbnontrivial}
\[
|\ha(\overline{Y})|+|\hb(Y)| \leq   \binom{n-q-1}{k-1} - 2\binom{n-k-1}{k-1}+\binom{n-2k+q-1}{k-1}+2.
\]
Moreover, the equality holds iff $\hb(Y)$ consists of two disjoint sets $S,T\in \binom{V\setminus Y}{k-q}$. Then $\hb=\{X, S\cup Y, T\cup Y\}$. By the intersection property of $\hb$, there exist $x_i\in S\cap X$ and $x_j\in T\cap X$. But then $\{x_i,x_j\}$ is a 2-cover of $\hb$, contradicting $\hhat=X\times Y$. Thus the inequality is strict and \eqref{ineq-hahby2} holds.

 If $\ha(\overline{Y})= \emptyset$, since $\hb$ is intersecting, then
\begin{align*}
|\ha(\overline{Y})|+|\hb(Y)| = |\hb(Y)|& \leq\binom{n-q-1}{k-q} -\binom{n-k-q-1}{k-q}\\[3pt]
&\overset{\eqref{ineq-key9}}<\binom{n-q-1}{k-1} - 2\binom{n-k-1}{k-1}+\binom{n-2k+q-1}{k-1}+2.
\end{align*}
\end{proof}

By adding \eqref{ineq-ha0} with $p=k$, \eqref{ineq-hahbx2} and \eqref{ineq-hahby2}, we arrive at
\begin{align*}
|\ha|+|\hb|&\leq |\ha_0|+|\ha(\overline{X})|+|\hb(X)|+|\ha(\overline{Y})|+|\hb(Y)|\\[3pt]
&< \binom{n-1}{k-1}+\binom{n-k-q-1}{k-1}-3\binom{n-k-1}{k-1}+\binom{n-2k+q-1}{k-1}+3.
\end{align*}
Let
\[
f(q)= \binom{n-1}{k-1}+\binom{n-k-q-1}{k-1}-3\binom{n-k-1}{k-1}+\binom{n-2k+q-1}{k-1}+3.
\]
Note that $2f(q)< f(q+1)+f(q-1)$ for $2< q< k-1$. We infer $|\ha|+|\hb| < \max\{f(2),f(k-1)\}$.
Let us show  that for $k\geq 4$, $f(2)<|\hg(n,k)|$.
\begin{align*}
f(2)=& \binom{n-1}{k-1}+\binom{n-k-3}{k-1}-3\binom{n-k-1}{k-1}+\binom{n-2k+1}{k-1}+3\\[3pt]
=&\binom{n-1}{k-1}-\binom{n-k-1}{k-1}-\binom{n-k}{k-1}+\binom{n-k-1}{k-2}+\binom{n-k-3}{k-1}
-\binom{n-k-1}{k-1}\\[3pt]
&\qquad +\binom{n-2k}{k-1}+\binom{n-2k}{k-2}+3\\[3pt]
=&|\hg(n,k)|+\binom{n-k-1}{k-2}+\binom{n-k-3}{k-1}-\binom{n-k-1}{k-1}+\binom{n-2k}{k-2}-\binom{n-k-2}{k-3}\\[3pt]
=&|\hg(n,k)|+\binom{n-2k}{k-2}-\binom{n-k-3}{k-2}<|\hg(n,k)|.
\end{align*}
Let us show next $f(k-1)= |\hg(n,k)|$.
\begin{align*}
f(k-1)=& \binom{n-1}{k-1}+\binom{n-2k}{k-1}-3\binom{n-k-1}{k-1}+\binom{n-k-2}{k-1}+3\\[3pt]
=&\binom{n-1}{k-1}+\binom{n-2k}{k-1}-\binom{n-k-1}{k-1}-\binom{n-k}{k-1}+\binom{n-k-1}{k-2}
-\binom{n-k-1}{k-1}\\[3pt]
&\qquad+\binom{n-k-2}{k-1}+3\\[3pt]
=&|\hg(n,k)|+\binom{n-k-1}{k-2}-\binom{n-k-1}{k-1}+\binom{n-k-2}{k-1}-\binom{n-k-2}{k-3}\\[3pt]
=&|\hg(n,k)|+\binom{n-k-2}{k-2}-\binom{n-k-1}{k-1}+\binom{n-k-2}{k-1}=|\hg(n,k)|.
\end{align*}
Thus we obtain that $|\ha|+|\hb|< |\hg(n,k)|$ and the proposition is proven.
\end{proof}

Combining Propositions \ref{lem7.5} and \ref{lem7.6}, we conclude that the theorem holds.
\end{proof}

\vspace{6pt}\noindent
{\bf Acknowledgement:}  The first author's research was partially supported by the National Research, Development and Innovation Office NKFIH, grant K132696.

%
%
%

\end{document}